\theoremstyle{plain}
\newtheorem{theorem}{Theorem}[section]
\newtheorem{lemma}[theorem]{Lemma}
\theoremstyle{remark}
\newtheorem{remark}[theorem]{Remark}
\theoremstyle{definition}
\newtheorem{definition}[theorem]{Definition}
\newcommand{\whp}{w.h.p\@.}
\DeclareMathOperator{\minrank}{min-rank}
\definecolor{RED}{rgb}{1,0,0}\definecolor{BLUE}{rgb}{0,0,1} 
\begin{document} 
\title{The Minrank of Random Graphs over Arbitrary Fields}
\author{Noga Alon 
\thanks {Department of Mathematics, Princeton University, Princeton,
	New Jersey, USA; and Schools of Mathematics and Computer Science,
	Tel Aviv University, Tel Aviv, Israel. Email: {\tt nogaa@tau.ac.il}.
	Research supported in part by ISF grant No. 281/17, GIF grant No. G-1347-304.6/2016 and the Simons Foundation.}
\and Igor Balla \thanks{Department of Mathematics, ETH, 8092 Zurich. Email: igor.balla@math.ethz.ch.}
\and Lior Gishboliner \thanks{School of Mathematical Sciences, Tel Aviv University, Tel Aviv, 69978, Israel. Email: {\tt liorgis1@post.tau.ac.il}. Research supported by ERC Starting Grant 633509.}
\and Adva Mond \thanks{School of Mathematical Sciences, Tel Aviv University, Tel Aviv, 69978, Israel. Email: {\tt advamond@post.tau.ac.il}.}
\and Frank Mousset\thanks{
School of Mathematical Sciences, Tel Aviv University, Tel Aviv, 
Israel. Email: {\tt moussetfrank@gmail.com}.
Research supported by ISF grants 1028/16 and 1147/14, and ERC Starting Grant
633509.}
}
\maketitle
\begin{abstract}
The minrank of a graph $G$ on the set of vertices $[n]$ over a field $\mathbb{F}$ is the minimum possible rank of a
  matrix $M\in\mathbb{F}^{n\times n}$ 
  with nonzero diagonal entries such that
  $M_{i,j}=0$ whenever $i$ and $j$ are distinct nonadjacent vertices of $G$.
   This notion, over the real field, arises in the study of 
the Lov\'asz theta function of a graph. We obtain tight bounds for
the typical minrank of the binomial random graph $G(n,p)$ over any
finite or infinite field, showing that for every field
  $\mathbb{F}=\mathbb F(n)$ 
  and every $p=p(n)$ satisfying $n^{-1} \leq p \leq
  1-n^{-0.99}$, the minrank of $G=G(n,p)$ over $\mathbb{F}$ is
$\Theta(\frac{n \log (1/p)}{\log n})$ with high probability. The result for the real field
settles a problem raised by Knuth in 1994. The proof combines a
recent argument of Golovnev, Regev, and Weinstein, who proved the 
above result for finite fields of size at most $n^{O(1)}$, with tools from
linear algebra, including an estimate
of R\'onyai, Babai, and Ganapathy for the number of zero-patterns of
a sequence of polynomials.
\end{abstract}

\section{Introduction}
In this paper we discuss the notion of the minrank of a graph over
a field, defined as follows.
\begin{definition}
\label{def:minrank} 
  The minrank of a graph $G$ on the vertex set $[n]=\{1,2,
  \dots ,n\}$ over a field $\mathbb{F}$, denoted by
  $\minrank_{\mathbb{F}}(G)$, is the minimum possible rank of a
  matrix $M\in \mathbb F^{n\times n}$ with nonzero diagonal entries
  such that $M_{i,j}=0$ whenever $i$ and $j$ are distinct nonadjacent vertices
  of $G$.
\end{definition}
The notion of the minrank over the real field $\mathbb R$ (with the added
requirement that the representing matrix $M$ be positive
semidefinite) arises in the study of
orthogonal representations of graphs, which play an important role in the
definition of the Lov\'asz theta function of a graph and its
relation to the study of the Shannon capacity, see \cite{Kn,Lo}.
The minrank over finite fields has been studied for its connections
to the Shannon capacity \cite{Ha} and
to linear index
coding \cite{BBJK}.
Knuth \cite{Kn} raised the problem of 
determining the asymptotic behavior of the typical minrank
of the binomial  random  graph $G(n,p)$ over the real field 
for fixed $p \in (0,1)$ and
large $n$, mentioning that it is at least $\Omega(\sqrt n)$. The analogous problem over finite fields was raised by Lubetzky and Stav \cite{LS} also in the context of linear index coding. Haviv and Langberg \cite{HL} proved a lower bound of $\Omega(\sqrt{n})$ for the minrank of $G(n,p)$ over any fixed finite field and for any constant $p$.
In a recent beautiful paper, Golovnev, Regev, and Weinstein 
\cite{GRW} substantially improved the aformentioned results by showing that for any finite field ${\mathbb{F}}=\mathbb F(n)$ and every
$p=p(n)$ in $(0,1)$, the minrank of the random graph $G(n,p)$ over
${\mathbb{F}}$ is
with high probability 
at least
$$
\Omega\left(\frac{n \log (1/p)}{\log (n|{\mathbb{F}}|/p)}\right).
$$
Since the minrank of every graph over any
field is at most the chromatic number of its complement, the known 
results about the behavior of the chromatic number of random
graphs show that the above estimate is tight up to a constant
factor for every finite
field of size  at most $n^{O(1)}$ and for every $p$ which is not
too close to $0$ or $1$, e.g., for all $n^{-1} \leq p \leq
1-n^{-0.99}$. This, however, provides no information
for infinite fields, and in particular for the real field.

Our main result is an extension of this
result to all finite or infinite fields. Here and in what
follows, the expression ``with high probability'' (\whp{} for
short)
means ``with probability tending to $1$ as $n$ goes to infinity''.
\begin{theorem}
\label{thm:min_rank} 
  Let $\mathbb F=\mathbb F(n)$ be a field and assume that $p = p(n)$ satisfies $n^{-1}\leq p \leq 1$.
  Then \whp{}
  \[ 
    \minrank_{\mathbb{F}}(G(n,p)) \geq \frac{n \log(1/p)}{80\log n}. \]
\end{theorem}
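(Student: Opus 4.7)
The overall strategy is to extend the argument of Golovnev, Regev, and Weinstein \cite{GRW} (who handled the case $|\mathbb{F}| \leq n^{O(1)}$) to arbitrary fields, by replacing their enumeration of the parameter space of witness matrices---which has size $|\mathbb{F}|^{2nk}$ and is useless when $\mathbb{F}$ is infinite---with the Rónyai--Babai--Ganapathy (RBG) bound on the number of zero-nonzero patterns of a sequence of polynomials. The latter bound is purely combinatorial, depending only on the number of polynomials, their degrees, and the number of variables, \emph{not} on the underlying field.

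Concretely, I would write every rank-$\leq k$ matrix $M \in \mathbb{F}^{n \times n}$ as a product $M = UV$ with $U \in \mathbb{F}^{n \times k}$ and $V \in \mathbb{F}^{k \times n}$, so that each entry $M_{ij} = \sum_{\ell} U_{i\ell} V_{\ell j}$ is a polynomial of degree $2$ in the $N = 2nk$ entries of $U, V$. Applying RBG to the $n(n-1)$ off-diagonal polynomials gives a bound $Q = (en/k)^{O(nk)}$ on the number of realizable zero-patterns. For each such pattern $\sigma$ (with the additional requirement that $\sigma(i, i) \neq 0$ for all $i$), define $N(\sigma) := \{\{i, j\} : i \neq j,\, \sigma(i, j) \neq 0 \text{ or } \sigma(j, i) \neq 0\}$. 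Since any minrank witness $M$ with pattern $\sigma$ forces $E(G) \supseteq N(\sigma)$, a union bound yields
\[
\Pr[\minrank_{\mathbb{F}}(G(n,p)) \leq k] \leq \sum_{\sigma} p^{|N(\sigma)|}.
\]
A useful structural observation here is that $\alpha(N(\sigma)) \leq k$ for any realizable $\sigma$: an independent set of $N(\sigma)$ of size $t$ gives rise to a principal $t \times t$ submatrix of $M$ which is diagonal with nonzero entries, hence of rank $t \leq \operatorname{rank}(M) \leq k$.

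The main obstacle is bounding the sum $\sum_{\sigma} p^{|N(\sigma)|}$. The naive approach---combining the global RBG count $Q$ with the minimum value $|N(\sigma)| \geq n^2/(2k)$ (which follows from $\alpha(N(\sigma)) \leq k$ via Turán's theorem)---gives only $\minrank \gtrsim \sqrt{n \log(1/p)/\log n}$, off by a square root. To reach the sharp $\Omega(n \log(1/p)/\log n)$ bound, one must stratify realizable patterns by $|N(\sigma)|$, using the global RBG count for patterns with large $|N(\sigma)|$ (where $p^{|N(\sigma)|}$ is already small) and a refined count at each stratum---leveraging $\alpha(N(\sigma)) \leq k$ and a finer RBG-style estimate on patterns with a prescribed number of zero entries---for patterns with smaller $|N(\sigma)|$. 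This quantitative balancing, carried out carefully across the full range $p \in [n^{-1}, 1]$, is what yields the constant $1/80$ in the theorem statement.
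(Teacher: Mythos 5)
You correctly identify the high-level idea—replace the $|\mathbb{F}|^{2nk}$ enumeration of GRW with the R\'onyai--Babai--Ganapathy zero-pattern bound, which is field-independent---and you correctly diagnose that the direct application (parametrize $M=UV$ with $2nk$ variables, apply RBG once, combine with the Tur\'an-type bound $|N(\sigma)|\geq n^2/(\text{const}\cdot k)$) loses a square root. But the gap you flag in your own last paragraph is precisely where the real work lies, and your proposal does not actually close it.

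The difficulty is that in the $M=UV$ parametrization, the number of variables is always $2nk$, regardless of how few nonzero entries the resulting pattern has. RBG's bound $\binom{md+N}{N}$ is governed by $N$, so you cannot get a ``finer count at each stratum'' just by fixing $|N(\sigma)|=s$: the RBG estimate does not see $s$ at all, and the trivial bound $\binom{n^2}{s}\leq n^{2s}$ for patterns with $s$ nonzeros is far too weak to recover the lost square root. What the paper does instead is structurally different. It first applies the sparsification lemma of GRW (Lemma~\ref{lem:submatrix}) to pass to a principal $(n',k',s')$-submatrix $M'$ whose distinguished $k'$ basis rows and columns carry only $O(k's'/n')$ nonzeros in total. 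It then fixes the locations of those basis rows/columns and of their $\leq 4k's'/n'$ nonzero entries, and uses Cramer's rule to express \emph{every} entry of $M'$ as a polynomial of degree $\leq k'+1$ in just those $\leq 4k's'/n'$ quantities. Only then is RBG applied, and crucially with $N=O(k's'/n')$ variables rather than $2n'k'$. This yields $n^{O(k's'/n')}$ zero-patterns per choice of basis locations (Lemma~\ref{lem:main}), which is what lets $p^{\Theta(s')}$ overwhelm the count without a square-root loss. In short, the ``stratification + refined count'' step you gesture at requires a genuinely different parametrization of sparse low-rank matrices---the GRW principal-submatrix reduction plus the Cramer's-rule reparametrization---and none of this follows from a natural elaboration of the $M=UV$ setup. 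As written, your proposal contains the right framing and the right diagnosis of the obstacle, but not the mechanism that overcomes it.
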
 
The proof combines the method of 
Golovnev, Regev, and Weinstein
with tools from linear
algebra, most notably an estimate
of R\'onyai, Babai, and Ganapathy \cite{RBG}  for the number of zero patterns of
a sequence of multivariate polynomials over a field.

The result for the real field settles the problem of Knuth mentioned above. We
conclude this introduction by making several remarks. 

\subsection{Remarks}

\begin{remark}
  [Tightness of Theorem \ref{thm:min_rank}]
\label{remark:tight}
Theorem \ref{thm:min_rank} is tight up to the
value of the multiplicative constant $\frac{1}{80}$ for every field $\mathbb F$
and 
  every $n^{-1} \leq p \leq 1-n^{-0.99}$.
Indeed, for every graph $G$
and every field ${\mathbb{F}}$ we have $\minrank_{{\mathbb{F}}}(G) 
\leq \chi(\overline{G})$\footnote{Given a proper coloring of
$\overline{G}$, define $M$ by $M_{i,j} = 0$ if $i,j$ lie in different color
classes, and $M_{i,j} = 1$ otherwise. It is easy to see that the rank
of $M$ is the number of colors, and that $M_{i,j} = 0$ whenever $(i,j)
  \notin E(G)$.}, and it is known that for $p$ in the above range, $G = G(n,p)$ \whp{} satisfies $\chi(\overline{G}) = \Theta(\frac{n \log(1/p)}{\log n})$, 
see \cite{Bo,JLR}. This proves the following result:
\end{remark} 
\begin{theorem}
\label{t12} 
  Let $\mathbb F=\mathbb F(n)$ be a field and assume that $p = p(n)$ satisfies
  $n^{-1}\leq p \leq 1- n^{-0.99}$. Then \whp{}
  \[ 
    \minrank_{\mathbb{F}}(G(n,p)) =\Theta\left(\frac{n \log(1/p)}{\log n}\right). \]
\end{theorem}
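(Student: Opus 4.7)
The plan is to combine the lower bound of Theorem~\ref{thm:min_rank} with a matching upper bound coming from the chromatic number of the complementary random graph. For the lower bound, since the stated range $n^{-1} \leq p \leq 1 - n^{-0.99}$ is contained in the range $n^{-1} \leq p \leq 1$ where Theorem~\ref{thm:min_rank} applies, we immediately obtain
\[ \minrank_{\mathbb{F}}(G(n,p)) \geq \frac{n \log(1/p)}{80 \log n} = \Omega\!\left(\frac{n \log(1/p)}{\log n}\right) \]
with high probability, independently of the field~$\mathbb{F}$.

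For the matching upper bound, I would invoke the elementary inequality $\minrank_{\mathbb{F}}(G) \leq \chi(\overline{G})$, valid for any graph $G$ and any field $\mathbb{F}$. A proof is already sketched in the footnote of Remark~\ref{remark:tight}: starting from a proper coloring of $\overline{G}$ with $\chi(\overline{G})$ classes, one constructs the matrix $M \in \mathbb{F}^{n\times n}$ whose $(i,j)$-entry is $1$ when $i$ and $j$ share a color class and $0$ otherwise. This $M$ has nonzero diagonal, zeros on every non-edge of $G$, and rank equal to the number of color classes. Since $\overline{G(n,p)}$ is distributed as $G(n,1-p)$, it only remains to cite the classical results on the chromatic number of binomial random graphs (see, e.g., \cite{Bo,JLR}), which give
\[ \chi(G(n,1-p)) = \Theta\!\left(\frac{n \log(1/p)}{\log n}\right) \]
with high probability throughout the range $n^{-1} \leq p \leq 1 - n^{-0.99}$.

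Since the whole argument is just an assembly of Theorem~\ref{thm:min_rank}, the minrank--chromatic inequality, and a well-known chromatic number estimate, I do not expect a genuine obstacle. The only point requiring mild care is verifying that the chromatic number asymptotic has the same form $\Theta(n \log(1/p)/\log n)$ across the full range of~$p$: near $p = 1-n^{-0.99}$ the graph $G(n,1-p)$ is sparse (edge probability as small as $n^{-0.99}$) and one appeals to Luczak's theorem, while near $p = n^{-1}$ it is very dense and Bollob\'as' classical asymptotic formula applies. The gap $n^{-0.99}$ left from $1$ at the upper endpoint of $p$ is precisely what ensures that the sparse-regime chromatic number estimate remains in force, matching the $\log(1/p)$ factor exactly.
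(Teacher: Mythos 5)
Your proof is correct and follows essentially the same route as the paper, which establishes Theorem~\ref{t12} in Remark~\ref{remark:tight} by combining the lower bound from Theorem~\ref{thm:min_rank} with the inequality $\minrank_{\mathbb{F}}(G) \leq \chi(\overline{G})$ and the known asymptotics for $\chi(G(n,1-p))$. Your added care about the sparse versus dense regimes of the chromatic number (\L uczak near $p=1-n^{-0.99}$, Bollob\'as near $p=n^{-1}$) is a sensible and accurate elaboration of what the paper leaves implicit in its citation of \cite{Bo,JLR}.
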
 

\begin{remark} 
  [Amplifying the success probability in Theorem \ref{thm:min_rank}]
  \label{remark:concentration}  
  The proof of
  Theorem \ref{thm:min_rank} gives a bound of $n^{-\Omega(1)}$ on the
  probability that $G = G(n,p)$ (for $p \geq n^{-1}$)
  satisfies $\minrank_{\mathbb{F}}(G)
  < \frac{n \log(1/p)}{80\log n}$. Using Azuma's inequality for the
  vertex exposure martingale, one can show that $\minrank_{\mathbb{F}}(G)$
  is highly concentrated around its expectation, which is $\Omega(
  \frac{n \log(1/p)}{\log n} )$ by Theorem \ref{thm:min_rank}. This
  way, one can deduce that $\minrank_{\mathbb{F}}(G) \geq \Omega(
  \frac{n \log(1/p)}{\log n} )$ holds with probability at least $1 -
  e^{-\Omega(n/\log^2 n)}$, see~\cite{GRW} for a detailed argument.
\end{remark} 

\paragraph{Paper organization}
The proof of
Theorem~\ref{thm:min_rank} is given in Sections 2 and 3. Section 4 contains a
number of applications of our main theorem to the study of various geometric
representations of random graphs. Section 5 contains some concluding remarks
and open problems.

\section{Preliminaries} 
\begin{definition}
  \label{def:sparse_matrix}
  An \emph{$(n,k,s)$-matrix} (over some field $\mathbb F$) is an $n \times n$ matrix $M$ of rank $k$
  with $s$ nonzero entries and containing rows $R_{i_1},\dots,R_{i_k}$
  and columns $C_{j_1},\dots,C_{j_k}$ such that $R_{i_1},\dots,R_{i_k}$
  is a row basis for $M$, $C_{j_1},\dots,C_{j_k}$ is a column basis for
  $M$, and the overall number of nonzero entries in all $2k$ vectors
  $R_{i_1},\dots,R_{i_k},C_{j_1},\dots,C_{j_k}$ is at most $4ks/n$.
\end{definition}
\noindent
The following is the key lemma in \cite{GRW}.
\begin{lemma}\label{lem:submatrix}
  Let $\mathbb F$ be any field and let $M\in \mathbb F^{n\times n}$ be a matrix of rank $k$.
  Then there exist integers $n'$, $k'$, and $s'$ with $k'/n'\leq k/n$ such that $M$ contains 
  an $(n',k',s')$-principal-submatrix (that is, a principal submatrix that is
  an $(n',k',s')$-matrix).
\end{lemma}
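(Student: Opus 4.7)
The plan is to prove Lemma~\ref{lem:submatrix} by iterated averaging and restriction to principal submatrices. The main idea is to successively peel away rows and columns that carry a disproportionate number of nonzero entries, while controlling the rank-to-size ratio.

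Concretely, let $M$ have size $n$, rank $k$, and $s$ nonzero entries. By Markov's inequality, at most $n/4$ rows of $M$ have more than $4s/n$ nonzeros, and the same bound holds for columns; hence the set $T\subseteq[n]$ of indices $i$ for which both row $i$ and column $i$ contain at most $4s/n$ nonzeros satisfies $|T|\ge n/2$. On the principal submatrix $M_T$ of size $n'=|T|$, rank $k'=\mathrm{rank}(M_T)$, and $s'$ nonzeros, every row and every column still contains at most $4s/n$ nonzero entries, so any row basis of $M_T$ together with any column basis of $M_T$ contain at most $8k's/n$ nonzeros in total. This meets the target $4k's'/n'$ precisely when $s'/n'\ge 2s/n$.

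If $M_T$ satisfies both the density condition $s'/n'\ge 2s/n$ and the ratio condition $k'/n'\le k/n$, then $M_T$ is the required $(n',k',s')$-principal-submatrix and we are done. Otherwise I would iterate the construction on $M_T$ in place of $M$. Since the size strictly decreases at each step, the iteration terminates, and degenerate end-states (for instance, the zero matrix or a $1\times 1$ matrix) trivially satisfy the conclusion.

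The main obstacle is preserving the ratio condition $k'/n'\le k/n$ across the iteration: restricting to a sparse principal submatrix reduces both $n$ and (possibly) $k$, but not necessarily in proportion, so a single step of the naive averaging above may cause $k'/n'$ to exceed $k/n$. Overcoming this will likely require a finer choice of which indices to peel -- for example, weighting each index by a combination of its row nonzero count, its column nonzero count, and a rank-contribution term, so that peeling an index with high weight forces the rank to drop by enough to keep the ratio under control. An alternative route is to combine the averaging above with a matroid-exchange or greedy argument that directly produces a row basis (and column basis) of the starting matrix whose total nonzero count is small, reducing the lemma to a single-step averaging rather than an iteration.
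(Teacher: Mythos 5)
Your proposal identifies the right target but does not constitute a proof, and you have correctly flagged the gap yourself. The single Markov step gives a set $T$ with $|T|\ge n/2$ on which every row and column of $M_T$ is individually light, but neither of the two conditions you then need --- the density increase $s'/n'\ge 2s/n$ and the ratio preservation $k'/n'\le k/n$ --- is implied. The density condition in fact tends to fail in the wrong direction: you are deleting precisely the densest rows and columns, so $s'/n'$ typically goes \emph{down}, not up by a factor of two. The ratio condition can also fail, for instance when the deleted rows all lie in the row span of the retained ones, so that $k'$ stays near $k$ while $n'$ roughly halves. Your fallback is to iterate, but you give no potential function showing the iteration ever reaches a state in which both conditions hold simultaneously; termination by decreasing size alone does not do this, and the degenerate end-states you mention need not be reached with the invariant $k'/n'\le k/n$ intact.

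The route actually taken (the lemma is quoted here from \cite{GRW} without proof) is not iterative and does not use Markov at all. One chooses $U\subseteq[n]$ to be a \emph{minimal} set with $\mathrm{rank}(M_U)/|U|\le k/n$, so the ratio condition holds by construction. Minimality then yields a pointwise lower bound $\mathrm{rank}(M_S)\ge (k'/n')\,|S|$ for every $S\subseteq U$, and feeding this into the matroid greedy (Kruskal) bound for the minimum-weight row basis --- where the weight of index $i$ is the number of nonzeros in row $i$ of $M_U$ --- gives a row basis of total weight at most $k's'/n'$; the same holds for columns, so the combined count is at most $2k's'/n' \le 4k's'/n'$. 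The matroid-exchange idea you gesture at in your last paragraph is indeed the key ingredient, but it has to be coupled with the extremal, ratio-minimizing choice of $U$ rather than with Markov-based peeling: it is exactly that extremal choice which supplies the subrank lower bound making the greedy estimate close.
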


The {\em zero-pattern} of a sequence $(y_1,\dots,y_m) \in \mathbb{F}^m$ is the sequence $(z_1,\dots,z_m) \in \{0,\ast\}^m$ such that $z_i = 0$ if $y_i = 0$ and $z_i = \ast$ if $y_i \neq 0$. For a sequence of polynomials $\bar{f} = (f_1,\dots,f_m)$ over a field $\mathbb{F}$ in variables $X_1,\dots,X_N$, the {\em set of
zero-patterns} of $\bar{f}$ is the set of all zero-patterns of sequences obtained by assigning values from $\mathbb{F}$ to the variables $X_1,\dots,X_N$ in $(f_1,\dots,f_m)$. 
We define the zero-pattern of a matrix $M \in \mathbb{F}^{n \times n}$, and the set of zero-patters of a matrix whose entries are polynomials, by treating the matrix as a sequence of length $n^2$. 
R\'{o}nyai,
Babai, and Ganapathy \cite{RBG} gave the following bound for the number
of zero-patterns of a sequence of polynomials:
\begin{lemma}\label{lem:zero_patterns}
  Let $\bar{f} = (f_1,\dots,f_m)$ be a sequence of polynomials in $N$
  variables over a field $\mathbb{F}$, each of degree at most $d$. Then
  the number of zero-patterns of $\bar{f}$ is at most $\binom{md+N}{N}$.
\end{lemma}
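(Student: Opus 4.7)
My plan is to prove this via the standard linear-algebra argument: attach to each realized zero-pattern a polynomial, show that these polynomials are linearly independent in the space of polynomials of degree at most $md$ in $N$ variables, and then conclude from the dimension of that space. Concretely, for each realized $\sigma \in \{0,*\}^m$, I would fix a witness $a_\sigma \in \mathbb{F}^N$ satisfying $f_i(a_\sigma) = 0 \iff \sigma_i = 0$, and define
$$p_\sigma \;=\; \prod_{i : \sigma_i = *} f_i,$$
which has total degree at most $md$.

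The crucial observation is that, writing $S_\sigma = \{i : \sigma_i = *\}$, one has $p_\sigma(a_\tau) \neq 0$ if and only if $S_\sigma \subseteq S_\tau$. Assuming a nontrivial linear dependence $\sum_\sigma c_\sigma p_\sigma = 0$, I would pick $\tau$ minimizing $|S_\tau|$ among the indices with $c_\sigma \neq 0$ and evaluate at $a_\tau$. Only terms indexed by $\sigma$ with $S_\sigma \subseteq S_\tau$ survive, and among those with $c_\sigma \neq 0$, minimality of $|S_\tau|$ forces $|S_\sigma| \geq |S_\tau|$, hence $S_\sigma = S_\tau$ and thus $\sigma = \tau$. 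This leaves $c_\tau p_\tau(a_\tau) = 0$, which contradicts $p_\tau(a_\tau) \neq 0$ and $c_\tau \neq 0$. Hence the polynomials $\{p_\sigma\}$, indexed by realized zero-patterns, are linearly independent. Since the space of polynomials of degree at most $md$ in $N$ variables over $\mathbb{F}$ has dimension exactly $\binom{md+N}{N}$, the number of realized zero-patterns is at most this quantity.

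I do not anticipate any serious obstacle; the only delicate point is evaluating at $a_\tau$ for a $\tau$ of \emph{minimal} (rather than maximal) $|S_\tau|$ among the surviving indices, which is precisely what makes the evaluated sum collapse to a single nonzero term. Everything else is a routine dimension count in the polynomial ring $\mathbb{F}[X_1,\dots,X_N]$.
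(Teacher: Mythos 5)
Your argument is correct and is in fact the standard R\'onyai--Babai--Ganapathy proof; the paper itself only cites this lemma from \cite{RBG} without reproducing the proof. The key step---evaluating a hypothetical linear dependence $\sum_\sigma c_\sigma p_\sigma = 0$ at a witness $a_\tau$ with $|S_\tau|$ \emph{minimal} among the indices with $c_\sigma \neq 0$, so that the triangular structure $p_\sigma(a_\tau) \neq 0 \iff S_\sigma \subseteq S_\tau$ collapses the sum to the single term $c_\tau p_\tau(a_\tau)$---is exactly the argument in \cite{RBG}, and the dimension count $\binom{md+N}{N}$ for polynomials of degree at most $md$ in $N$ variables is standard.
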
  
\noindent
We now state and prove the key lemma of this paper.
\begin{lemma}
  \label{lem:main}
  The number of zero-patterns of $(n,k,s)$-matrices is 
  at most $\binom{n}{k}^2 \cdot n^{20ks/n}$.
\end{lemma}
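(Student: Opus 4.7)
The strategy is to encode each $(n,k,s)$-matrix by the data in its basis rows and basis columns, to express every entry of $M$ as a polynomial in this sparse data (after clearing a common denominator), and then to invoke Lemma~\ref{lem:zero_patterns}.

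For a fixed $(n,k,s)$-matrix $M$ with basis row indices $I = \{i_1,\dots,i_k\}$ and basis column indices $J = \{j_1,\dots,j_k\}$, let $D\in\mathbb F^{n\times k}$ be the matrix whose $r$-th column is $C_{j_r}$, let $M_{\mathrm{rows}}\in\mathbb F^{k\times n}$ be the matrix whose $r$-th row is $R_{i_r}$, and set $D' = M[I,J]$. First I would verify that $D'$ is invertible: writing $M = DB$ for the unique $k\times n$ matrix $B$ with $B_{*,j_r} = e_r$, the basis rows are $D'B$, so their linear independence forces $D'$ to have full rank. Expressing every row of $M$ as a combination of the basis rows and solving for the coefficients via the basis-column positions then yields the identity $M = D(D')^{-1} M_{\mathrm{rows}}$, which after multiplying through by $\det(D')$ becomes
\[ M_{i,j}\cdot\det(D') \;=\; D_{i,*}\cdot\mathrm{adj}(D')\cdot M_{\mathrm{rows},*,j}, \]
a polynomial of degree $k+1$ in the entries of $D$ and $M_{\mathrm{rows}}$.

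Next I would exploit sparsity: by definition of an $(n,k,s)$-matrix, the basis rows and columns together contain at most $4ks/n$ nonzero entries. The plan is to enumerate over (i) the index sets $I,J$, contributing $\binom{n}{k}^2$, and (ii) a support set $S \subseteq (I\times[n])\cup([n]\times J)$ of size at most $4ks/n$ marking which basis entries are allowed to be nonzero, contributing at most a harmless constant times $\binom{2kn}{4ks/n}$. For each triple $(I,J,S)$, introduce a formal variable $X_{i,j}$ for every $(i,j)\in S$ and fix every other basis entry to $0$; under this substitution each $M_{i,j}\det(D')$, as well as $\det(D')$ itself, is a polynomial of degree at most $k+1$ in $|S|\le 4ks/n$ variables. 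Since $\det(D')\neq 0$ for a genuine $(n,k,s)$-matrix, the zero-pattern of $M$ coincides with the zero-pattern of the rescaled entries, so the sequence of $n^2+1$ polynomials $\{M_{i,j}\det(D')\}_{i,j}\cup\{\det(D')\}$ controls all zero-patterns compatible with $(I,J,S)$, and Lemma~\ref{lem:zero_patterns} bounds their number by $\binom{(n^2+1)(k+1)+4ks/n}{4ks/n}$.

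Combining all three enumerations produces the bound
\[ \binom{n}{k}^2 \cdot \binom{2kn}{4ks/n} \cdot \binom{(n^2+1)(k+1)+4ks/n}{4ks/n} \]
(up to an absorbed constant). Applying $\binom{m}{t}\le(em/t)^t$ to each of the two right-hand binomials gives roughly $(en^2/s)^{4ks/n}$ and $(en^3/s)^{4ks/n}$, and their product is at most $n^{20ks/n}$ once $s$ exceeds a small absolute constant, with trivial edge cases ($s=0$ or $k=0$) handled separately. I expect this final arithmetic balancing to be the main delicate point: both the support-set enumeration and the zero-pattern lemma each contribute a factor of $n^{\Theta(ks/n)}$, and keeping the exponent down to $20\,ks/n$ depends crucially on the fact that $M_{i,j}\det(D')$ has degree only $k+1$, rather than the higher degree that would come from a less efficient parametrization.
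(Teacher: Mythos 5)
Your proof is correct and follows essentially the same route as the paper's: enumerate the basis index sets $I,J$ (factor $\binom{n}{k}^2$) and the support $S$ of the basis rows and columns (factor $\approx n^{8ks/n}$), observe that $D'=M[I,J]$ is invertible, express every entry of $M$ times $\det(D')$ as a degree-$(k+1)$ polynomial in the $\le 4ks/n$ free variables, and apply the R\'onyai--Babai--Ganapathy zero-pattern bound. The only cosmetic difference is that you package the computation as the factorization $M = D(D')^{-1}M_{\mathrm{rows}}$, whereas the paper derives the same polynomial identity column by column via Cramer's rule.
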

\begin{proof}
  It is easy to see that the lemma holds for $k \geq n-1$ (since the number of zero-patterns of $n \times n$ matrices with $s$ nonzero entries is clearly at most $\binom{n^2}{s} \leq n^{2s}$), so we may assume for
  convenience that $k \leq n-2$. The term $\binom{n}{k}^2$ corresponds to the
  number of ways to
  choose the sequences $(i_1,\dots,i_k)$ and $(j_1,\dots,j_k)$ from
  Definition \ref{def:sparse_matrix} (that is, the number of ways to
  choose the positions of the rows $R_{i_1},\dots,R_{i_k}$ and the
  columns $C_{j_1},\dots,C_{j_k}$). From now on we assume without loss
  of generality that $(i_1,\dots,i_k)=(j_1,\dots,j_k)=(1,\dots,k)$. The
  number of ways to choose a set $\mathcal{F} \subseteq ([k] \times [n])
  \cup ([n] \times [k])$ of at most $4ks/n$ entries which are allowed to be
  nonzero is at most
  $$
  \sum_{t = 0}^{4ks/n}{\binom{2kn}{t}} \leq 
  \left( \frac{e \cdot n^4}{s^2} \right)^{2ks/n} \leq n^{8ks/n}
  $$ 
  (the first inequality follows from the fact that for all $0<x<1$, we have
  $\sum_{t=0}^{4ks/n}\binom{2kn}{t}x^{4ks/n} \leq
  \sum_{t=0}^{4ks/n}\binom{2kn}{t}x^t\leq
  (1+x)^{2kn}
  \leq e^{x2kn}$ and setting $x=s/n^2<1$).
  So it is enough to show that for every fixed 
  $\mathcal{F} \subseteq ([k] \times [n]) \cup ([n] \times [k])$ as above,
  there are at most $n^{12ks/n}$ zero-patterns of matrices for which
  the first $k$ rows form a row basis, the first $k$ columns form a column
  basis, and for every $(i,j)$ with $\min(i,j) \leq k$ and $(i,j) \notin
  \mathcal{F}$, the $(i,j)$-entry is zero. Let $M = (M_{i,j})$
  be such a matrix, and denote by $M'$ the submatrix of $M$ on $[k] \times
  [k]$. 
  We claim that $M'$ is invertible. Indeed, let $M''$ be the submatrix consisting of the first $k$ rows of $M$. Then $\text{rank}(M'') = k$ (because the rows of $M''$ form a row basis of $M$) and the columns of $M'$ span the column space of $M''$ (because the first $k$ columns of $M$ span its column space). It follows that the columns of $M'$ are linearly independent, as required.   
 
  Fix any $k+1 \leq \ell \leq
  n$. The $\ell$-th column of $M$ is a linear combination of the first
  $k$ columns of $M$. The coefficients in this linear combination are the
  coordinates of the unique solution to the system
  $$
  M' \cdot x = 
  \begin{pmatrix}
    M_{1,\ell} \\
    \vdots \\
    M_{k,\ell}
  \end{pmatrix}.
  $$  
  By Cramer's rule, this solution can be expressed as 
  $$		
  \begin{pmatrix}
    f_{1,\ell}(y^{(\ell)})/\det(M') \\
    \vdots \\
    f_{k,\ell}(y^{(\ell)})/\det(M')
  \end{pmatrix},
  $$
  where $f_{1,\ell},\dots,f_{k,\ell}$ are polynomials of degree $k$
  (which do not depend on the matrix $M$), and the vector of variables
  $y^{(\ell)}$ contains the entries $(M_{i,j})_{i,j=1}^{k}$ and
  $(M_{i,\ell})_{i=1}^{k}$. We see that for every $k+1 \leq \ell \leq n$
  and $1 \leq i \leq n$, we have
  $$
  M_{i,\ell} = \frac{1}{\det(M')}\sum_{j=1}^{k}{f_{j,\ell}(y^{(\ell)}) 
  \cdot M_{i,j}}.
  $$
  This means that every entry of $M$ can be given as a polynomial of degree
  $k+1$ in the entries $(M_{i,j})_{\min(i,j) \leq k}$, divided by
  the nonzero polynomial $\det(M')$. Since $M_{i,j} = 0$
  if $\min(i,j) \leq k$ and $(i,j) \notin \mathcal{F}$, it is enough to
  take $(M_{i,j})_{(i,j) \in \mathcal{F}}$ as the sequence of variables
  of all polynomials. We conclude that the zero-pattern of $M$ is the
  zero-pattern of a sequence of $n^2$ polynomials in $|\mathcal{F}|
  \leq 4ks/n$ variables, each of degree (at most) $k+1$
  (note that removing the factor $\frac{1}{\det(M')}$ does not change the
  zero-pattern of $M$, and that all polynomials are independent of $M$). By
  Lemma \ref{lem:zero_patterns}, the number of zero-patterns of this matrix
  of polynomials is at most
  $$
  \binom{(k+1)n^2 + 4ks/n}{4ks/n} \leq
  \binom{(k+2)n^2}{4ks/n} \leq 
	(k+2)^{4ks/n} n^{8ks/n} \leq 
  n^{12ks/n},
  $$
  as required.
\end{proof}
\noindent
Finally, we will need the following simple lemma from \cite{GRW}
(which follows, with a slightly better constant, from Tur\'an's Theorem). 
\begin{lemma}\label{lem:many_non-zero_entries}
  Every $n \times n$ matrix of rank $k$ having nonzero entries on the 
  main diagonal contains at least $n^2/(4k)$ nonzero entries. 
\end{lemma}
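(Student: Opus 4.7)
The plan is to translate the statement into a question about the independence number of an auxiliary graph and then invoke Tur\'an's theorem. Given such a matrix $M$, I would define a graph $G$ on the vertex set $[n]$ by placing an edge between distinct $i$ and $j$ whenever $M_{i,j}\neq 0$ or $M_{j,i}\neq 0$. Every edge of $G$ witnesses at least one nonzero off-diagonal entry of $M$, so the total number of nonzero entries of $M$ is at least $|E(G)|+n$, where the summand $n$ accounts for the nonzero diagonal.

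The main structural step is to show that $\alpha(G)\leq k$. If $S\subseteq[n]$ is an independent set in $G$, then $M_{i,j}=0$ for all distinct $i,j\in S$, so the principal submatrix $M[S,S]$ is diagonal with nonzero diagonal entries and therefore has rank exactly $|S|$. As a submatrix of $M$ its rank is at most $\text{rank}(M)=k$, forcing $|S|\leq k$.

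Given $\alpha(G)\leq k$, the complement $\overline{G}$ contains no $K_{k+1}$, and Tur\'an's theorem yields $|E(\overline{G})|\leq (1-1/k)\binom{n}{2}$, whence
\[
|E(G)|\geq \binom{n}{2}/k = \frac{n(n-1)}{2k} \geq \frac{n^2}{4k}
\]
as soon as $n\geq 2$ (the case $n=1$ is trivial). This already exceeds the claimed $n^2/(4k)$ even before counting the $n$ diagonal entries, consistent with the remark in the text that Tur\'an actually yields a slightly better constant. There is no real obstacle to this argument: the only nonroutine step is the diagonal-submatrix observation bounding $\alpha(G)$, and everything else is bookkeeping to extract the constant $\tfrac{1}{4}$.
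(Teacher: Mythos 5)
The paper does not actually prove this lemma; it quotes it from \cite{GRW} and remarks only that Tur\'an's theorem gives it with a slightly better constant. Your proposal is exactly the Tur\'an argument the authors are alluding to: the auxiliary graph, the observation that an independent set $S$ yields a nonsingular diagonal principal submatrix $M[S,S]$ of order $|S|\le k$, and then Tur\'an applied to the complement. That core idea is correct and cleanly stated. (The original \cite{GRW} proof is a different, more hands-on counting argument that lands directly on the $1/4$ constant; your route gets roughly $1/2$.)

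There is, however, a small slip in the Tur\'an step. Tur\'an's theorem for $K_{k+1}$-free graphs gives $|E(\overline{G})| \le (1-1/k)\,n^2/2$, not $(1-1/k)\binom{n}{2}$ (for instance $T(4,2)=K_{2,2}$ has $4>3=(1-1/2)\binom{4}{2}$ edges). With the correct bound you get $|E(G)| \ge \binom{n}{2} - (1-1/k)\tfrac{n^2}{2} = \tfrac{n^2}{2k} - \tfrac{n}{2}$, which is $\tfrac{n(n-k)}{2k}$, not $\tfrac{n(n-1)}{2k}$. In particular your aside that the edge count alone already exceeds $n^2/(4k)$ is false when $k>n/2$. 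The conclusion is still fine once you add the $n$ diagonal nonzeros: the total is at least $\tfrac{n^2}{2k} - \tfrac{n}{2} + n = \tfrac{n^2}{2k} + \tfrac{n}{2} \ge \tfrac{n^2}{4k}$, confirming both the lemma and the paper's remark about a better constant. So: right approach, minor misstatement of Tur\'an, and you genuinely need the diagonal entries in the final inequality.
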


\section{The Min-rank of Random Graphs}
\begin{proof}[Proof of Theorem \ref{thm:min_rank}]
  If, say, $p \geq 1 - n^{-1}$, then $\frac{n \log(1/p)}{\log n} = o(1)$, so
  the theorem holds trivially. So from now on we assume that $p < 1 -
  n^{-1}$.

  Suppose that $G$ is an $n$-vertex graph with $\minrank_{\mathbb{F}}(G)
  \leq k$. Then by definition, there is a $n \times n$ matrix $M$ over
  $\mathbb{F}$ of rank at most $k$, such that all entries of $M$ on the main
  diagonal are nonzero, and such that $M_{i,j}
  = 0$
  whenever $(i,j) \notin E(G)$. By Lemma \ref{lem:submatrix}, $M$ contains an $(n',k',s')$-principal
  submatrix $M'$ with $k'/n' \leq k/n$.

  We conclude that for every graph $G$ satisfying $\minrank_{\mathbb{F}}(G)
  \leq k$, there is a set $U \subseteq V(G)$ and an $(n',k',s')$-matrix $M'$,
  where $n' = |U|$ and $k'/n' \leq k/n$, such that all entries
  of $M'$ on the main diagonal are nonzero, and such that for every pair of
  distinct $i,j \in U$, we have $M'_{i,j} = 0$ whenever  $(i,j) \notin E(G)$. For given
  $n',k',s'$, the number of choices for $U$ is $\binom{n}{n'}$, and the number
  of zero-patterns of $(n',k',s')$-matrices is at most $\binom{n'}{k'}^2 \cdot {n'}^{20k's'/n'}$
  by Lemma \ref{lem:main}. Fixing $U$ and the zero-pattern of $M'$, the probability that $G
  = G(n,p)$ satisfies the above event with respect to $U,M'$ is at most
  $p^{(s'-n')/2}$, since there are at least $(s' - n')/2$ pairs $1 \leq i
  < j \leq n'$ for which either $M'_{i,j} \neq 0$ or $M'_{j,i} \neq 0$, and each such pair must span an
  edge in $G$.  By Lemma \ref{lem:many_non-zero_entries} we have $s' \geq
  n'^2/(4k')\geq n' n/(4k)$. Hence, the probability that $G = G(n,p)$ satisfies
  $\minrank_{\mathbb{F}}(G) \leq k$ is at most
  \begin{align}\label{eq:union_bound} 
    &\nonumber\sum_{n' =1}^n \; 
      \sum_{k' =1}^{n'k/n} \sum_{s'\geq n'\cdot \frac{n}{4k}} {
      \binom{n}{n'} \cdot \binom{n'}{k'}^2 \cdot {n'}^{20k's'/n'}
      \cdot p^{(s' - n')/2}
    } \\ \leq{} &
      \sum_{n' =1}^n \; \sum_{k'=1}^{n'k/n} {
      n^{n'+2k'} \cdot p^{-n'/2}
      \sum_{s'\geq n'\cdot \frac{n}{4k}} {\left(n^{20k/n}p^{1/2}\right)^{s'}
      } }
  \end{align} 
  For $k \leq \frac{n
  \log(1/p)}{80\log n}$ we get $n^{20k/n} \leq (1/p)^{1/4}$, and so
  $$
  \sum_{s' \geq n' \cdot \frac{n}{4k}}{\left(n^{20k/n}p^{1/2}\right)^{s'} }
  \leq \sum_{s' \geq n' \cdot \frac{n}{4k}}{p^{s'/4}} \leq 
  p^{n' \cdot \frac{n}{16k}} \cdot \frac{1}{1 - p^{1/4}} \leq e^{-5n'\log n} \cdot
  \frac{1}{1 - p^{1/4}} = n^{-5n'} \cdot \frac{1}{1 - p^{1/4}}.  
  $$ 
  Hence, \eqref{eq:union_bound}
  is at most 
  \begin{align*} \frac{1}{1 - p^{1/4}} 
    \cdot \sum_{n' =1}^{n} \; \sum_{k'=1}
    ^{n'k/n} {n^{n'+2k'} \cdot p^{-n'} \cdot n^{-5n'}} \leq \frac{1}{1 - p^{1/4}} 
    \cdot \sum_{n'=1}^n {n^{4n'} \cdot n^{-5n'}} = \frac{1}{1 - p^{1/4}} 
    \cdot \sum_{n'=1}^n {n^{-n'}}.
  \end{align*}
  If (say) $p \leq 1/2$, then the above sum is clearly $o(1)$.
  In the complementary case $p > 1/2$ we have
  $k = O(n/\log n)$, and so in \eqref{eq:union_bound} we can restrict ourselves to
  $n'$ satisfying 
  $n' \geq n/k = \Omega(\log n)$ (as otherwise there
  are no $k'$ between $1$ and $n'k/n$). Now, recalling
  the assumption $p < 1 - n^{-1}$, we see that \eqref{eq:union_bound} evaluates to
  $o(1)$. This completes the proof.
\end{proof}

\section{Geometric Representations of Random Graphs}

\subsection{Orthogonal representations}

The parameter
$\minrank_{\mathbb{R}}(\cdot)$ is closely 
related to orthogonal representations
of graphs. An orthogonal representation of dimension $d$ of a graph $G$
is an assignment of nonzero vectors in $\mathbb{R}^d$ to the vertices
of $G$, so that the vectors corresponding to any nonadjacent pair are
orthogonal. Orthogonal representations of graphs were introduced by
Lov\'{a}sz in his seminal paper on the theta function \cite{Lo}
(see also \cite{Kn} for a survey on the subject). Note that if
$v_1,\dots,v_n \in \mathbb{R}^d$ is an orthogonal representation
of a graph $G$, then the Gram matrix $M$ of $v_1,\dots,v_n$ satisfies $M_{i,i} = \langle v_i,v_i \rangle \neq 0$ for each $1 \leq i \leq n$, and $M_{i,j} = \langle v_i,v_j \rangle = 0$
whenever $i,j$ are nonadjacent. Moreover, $M$ has rank at most $d$. 
It follows that the minimal $d$ for which $G$ has an orthogonal representation of dimension $d$ is at least as large as $\minrank_{\mathbb{R}}(G)$.\footnote{In fact, the minimal dimension of an orthogonal representation of $G$ is obtained by adding to the definition of the minrank (i.e. Definition \ref{def:minrank}) the restriction that $M$ is symmetric and positive semidefinite.}
Hence,
Theorem \ref{thm:min_rank} shows that the minimal dimension of an
orthogonal representation of $G = G(n,p)$ is \whp{}
$\Omega(\frac{n \log(1/p)}{\log n} )$. The same argument as in Remark
\ref{remark:tight} shows that this is tight for all 
$n^{-1} \leq p \leq 1 - n^{-0.99}$. This proves the following theorem, which settles a
problem of Knuth \cite{Kn}.
\begin{theorem}
\label{t41}
  For every $p=p(n)$ satisfying $n^{-1} \leq p \leq 1-n^{-0.99}$, the minimum
  dimension $d$ such that the random graph $G = G(n,p)$ has an orthogonal
  representation in $\mathbb{R}^d$ is, \whp{}, $\Theta(\frac{n
  \log(1/p)}{\log n})$.
\end{theorem}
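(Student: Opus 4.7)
The plan is to prove the two bounds separately, since each follows almost immediately from material already established in the paper. For the lower bound, I would invoke Theorem \ref{thm:min_rank} with $\mathbb{F} = \mathbb{R}$ to obtain $\minrank_{\mathbb{R}}(G(n,p)) \geq \frac{n \log(1/p)}{80 \log n}$ w.h.p., and combine this with the observation (stated in the paragraph preceding the theorem) that the minimum dimension of an orthogonal representation of $G$ is at least $\minrank_{\mathbb{R}}(G)$. This is because, given any orthogonal representation $v_1,\dots,v_n \in \mathbb{R}^d$ of $G$, the Gram matrix $M_{i,j} = \langle v_i, v_j\rangle$ has nonzero diagonal, vanishes on nonadjacent pairs, and has rank at most $d$; hence $d \geq \minrank_{\mathbb{R}}(G)$.

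For the matching upper bound I would mimic the construction from Remark \ref{remark:tight}, but now concretely as an assignment of vectors. Fix a proper coloring $c : V(G) \to [t]$ of $\overline{G}$ and assign to each vertex $i$ the standard basis vector $v_i = e_{c(i)} \in \mathbb{R}^t$. Two vertices receiving the same color form an independent set of $\overline{G}$, hence a clique of $G$, so any nonadjacent pair in $G$ necessarily receives distinct colors; consequently $\langle v_i, v_j\rangle = 0$ for all $(i,j) \notin E(G)$, giving an orthogonal representation in dimension $t = \chi(\overline{G})$. The classical estimate on the chromatic number of random graphs, applied to $\overline{G(n,p)}$, yields $\chi(\overline{G(n,p)}) = O\!\left(\frac{n\log(1/p)}{\log n}\right)$ w.h.p.\ for the range $n^{-1} \leq p \leq 1-n^{-0.99}$, as cited in Remark \ref{remark:tight}.

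Putting the two bounds together gives the claimed $\Theta\!\left(\frac{n\log(1/p)}{\log n}\right)$ asymptotics. No substantial obstacle remains: the hard analytic work was carried out in the proof of Theorem \ref{thm:min_rank}, while both the Gram matrix reduction and the coloring-based construction are immediate. The only routine points to verify are that the standard basis construction genuinely produces orthogonal (as opposed to merely zero-inner-product on the correct pairs) vectors, which it does, and that the range of $p$ we work in falls within the validity of the chromatic number estimate, which it does by the choice of $n^{-1} \leq p \leq 1-n^{-0.99}$.
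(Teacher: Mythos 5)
Your proposal is correct and follows essentially the same approach as the paper: the lower bound via the Gram-matrix reduction to $\minrank_{\mathbb{R}}$ and Theorem \ref{thm:min_rank}, and the upper bound via a proper coloring of $\overline{G}$ and the chromatic number estimate for $G(n,1-p)$, exactly as the paper's appeal to Remark \ref{remark:tight}. The only difference is that you spell out the standard-basis-vector realization of the coloring-based representation, which the paper leaves implicit.
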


\subsection{Unit distance graphs}

A {\em complete} unit-distance graph in $\mathbb{R}^d$ is a graph whose set
of vertices is a finite subset of the $d$-dimensional Euclidean space, where
two vertices are adjacent if and only if the Euclidean distance between them
is exactly $1$. A unit distance graph in $\mathbb{R}^d$ is any subgraph of
such a graph. Unit distance graphs have been considered in several papers,
see, e.g., \cite{AK} and the references therein. Note that if $u,v \in
\mathbb{R}^d$ are two adjacent vertices of such a unit distance graph, then
$\|u-v\|_2^2=1$. Let $u_1,u_2, \dots, u_n \in \mathbb{R}^d$ be  the vertices
of a unit distance graph $G$ in $\mathbb{R}^d$. Then the $n \times n$ matrix
$M$ defined by $M_{i,j}=1-\|u_i-u_j\|_2^2$ is a real matrix in
which every entry on the diagonal is $1$ and for every pair of distinct
adjacent vertices $u_i,u_j$, $M_{i,j}=0$. This implies that the rank of the
matrix $M$ must be at least $\minrank_{\mathbb R}(\overline{G}).$ On the other hand,
it is easy to see that the rank of $M$ is at most $d+2$. Indeed $M$ can be
expressed as a sum of three matrices $A,B,C$ where $A_{i,j}=1-\|u_i\|^2$,
$B_{i,j}=-\|u_j\|^2$ and $C_{i,j}=2u_i^t v_j$. As all columns of $A$ and all
rows of $B$ are identical, $A$ and $B$ are of rank $1$. The matrix $C$ is
twice the Gram matrix of vectors in $\mathbb{R}^d$, and hence its rank is at
most $d$. Therefore $M$ has rank at most $d+2$. It is also clear that every
graph of chromatic number $d$ is a unit distance graph in $\mathbb{R}^{d-1}$.
Indeed, the $d$ vertices $x_1, \dots ,x_d$ of a regular simplex of diameter
$1$ in $\mathbb{R}^{d-1}$ can be used to represent all vertices of $G$,
assigning  $x_i$ to all vertices in color class number $i$ of $G$, for $1
\leq i \leq d$. This establishes the following result.
\begin{theorem}
\label{t42}
  For every $p=p(n)$ satisfying
  $n^{-0.99} \leq p \leq 1-n^{-1}$, the minimum
  dimension $d$  such that the random graph
  $G = G(n,p)$ is a unit distance graph in $\mathbb{R}^d$ is,
  \whp{}, $\Theta(\frac{n \log(1/(1-p))}{\log n})$.
\end{theorem}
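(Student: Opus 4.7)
The paragraph preceding the theorem already contains the heart of the argument: for every graph $G$, if $G$ is a unit distance graph in $\mathbb{R}^d$ then $d+2\geq \minrank_{\mathbb R}(\overline G)$ (via the matrix $M_{i,j}=1-\|u_i-u_j\|_2^2$, whose rank is at most $d+2$ and which is a valid minrank witness for $\overline G$), while conversely $G$ is a unit distance graph in $\mathbb{R}^{\chi(G)-1}$ (via the regular simplex construction). The plan is to feed $G=G(n,p)$ into both inequalities, combining the first with Theorem \ref{t12} applied to the complement and the second with the classical estimate for $\chi(G(n,p))$.

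For the lower bound, I would first observe that $\overline{G(n,p)}$ has the same distribution as $G(n,1-p)$, and that the hypothesis $n^{-0.99}\leq p\leq 1-n^{-1}$ is exactly the range $n^{-1}\leq 1-p\leq 1-n^{-0.99}$ needed to apply Theorem \ref{t12} with probability parameter $1-p$. Theorem \ref{t12} then gives w.h.p.\ $\minrank_{\mathbb R}(\overline G)=\Theta\bigl(n\log(1/(1-p))/\log n\bigr)$, and the first inequality above yields $d\geq \minrank_{\mathbb R}(\overline G)-2=\Omega\bigl(n\log(1/(1-p))/\log n\bigr)$.

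For the upper bound, I would invoke the classical result (see \cite{Bo,JLR}) that in the same range of $1-p$, the chromatic number of $G(n,p)$ is w.h.p.\ $\Theta\bigl(n\log(1/(1-p))/\log n\bigr)$; this is the same estimate used for $\chi(\overline{G(n,p)})$ in Remark \ref{remark:tight}, with the roles of $p$ and $1-p$ swapped. Combined with the simplex bound $d\leq \chi(G)-1$, this matches the lower bound w.h.p.

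I do not expect any substantial obstacle: the argument is essentially a bookkeeping exercise once the two inequalities recalled in the preceding paragraph are in hand. The only mild point to verify is that the parameter range $n^{-0.99}\leq p\leq 1-n^{-1}$ of Theorem \ref{t42} really is the $p\leftrightarrow 1-p$ image of the range in Theorem \ref{t12}, which it is, and that both the minrank lower bound and the chromatic-number upper bound remain of the correct order throughout this range (near the endpoints the target $n\log(1/(1-p))/\log n$ becomes small and the bound holds trivially, as in the opening reduction of the proof of Theorem \ref{thm:min_rank}).
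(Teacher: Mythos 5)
Your proof plan is correct and reconstructs exactly the argument the paper leaves implicit: the matrix $M_{i,j}=1-\|u_i-u_j\|_2^2$ gives $d+2\ge\minrank_{\mathbb R}(\overline G)$, the simplex construction gives $d\le\chi(G)-1$, and feeding in $\overline{G(n,p)}\sim G(n,1-p)$ via Theorem~\ref{t12} (whose hypothesis range maps to $n^{-0.99}\le p\le 1-n^{-1}$ under $p\leftrightarrow 1-p$) together with the chromatic-number estimate for $G(n,p)$ closes both bounds. The additive $-2$ is harmless since the target quantity tends to infinity throughout the stated range, as you note.
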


\subsection{Graphs of touching spheres}

The notion of a unit distance graph can be extended 
as follows. Call a graph $G$ on $n$ vertices a
graph of touching spheres in $\mathbb{R}^d$ if there are
spheres $S_1, S_2 \dots ,S_n$ in $\mathbb{R}^d$, where the sphere 
$S_i$ is centered at $u_i$ and its radius is $r_i$, and for every
pair of adjacent vertices $i$ and $j$, the two corresponding 
spheres $S_i$ and $S_j$ touch each other
and their convex hulls have disjoint interiors. That is, the
distance between $u_i$ and $u_j$ is exactly $r_i+r_j$. Note that if
$r_i=1/2$ for all $i$, then this is exactly the definition of a unit
distance graph. For $u_i$ and $r_i$ as above, the matrix
$M=(M_{i,j})$ where $M_{i,j}=(r_i+r_j)^2 -\|u_i-u_j\|^2$ 
has nonzero diagonal elements and $M_{i,j}=0$ for every pair of
adjacent vertices $i,j$. Furthermore, $M$ can be written as a sum of 
the four matrices in which the $(i,j)$-th entry is 
$r_i^2-\|u_i\|^2$, $r_j^2-\|u_j\|^2$, $r_ir_j$,
and $2 u_i^t u_j$, respectively. These have ranks at most $1,1,1$,
and $d$, respectively, showing that the rank of any such matrix $M$ is 
at most $d+3$. The  chromatic number of $G$ provides a
representation as before (even as a unit distance graph), implying
the following extension of Theorem \ref{t42}.
\begin{theorem}
\label{t43}
  For every $p=p(n)$ satisfying
  $n^{-0.99} \leq p \leq 1-n^{-1}$, the minimum
dimension $d$ such that the random graph
$G = G(n,p)$ is a graph of touching spheres in $\mathbb{R}^d$ is,
  \whp{}, $\Theta(\frac{n \log(1/(1-p))}{\log n})$.
\end{theorem}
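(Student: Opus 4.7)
The plan is to combine the two ingredients already spelled out in the preamble to Theorem \ref{t43}: the geometric construction produces a low-rank matrix certifying the minrank of the complement, and the chromatic number provides a matching upper bound via the simplex embedding.

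First, for the lower bound, I would start from the matrix $M$ described in the paragraph preceding the theorem: for any touching-sphere representation $(u_i,r_i)_{i=1}^n$ in $\mathbb R^d$, the matrix $M_{i,j} = (r_i+r_j)^2 - \|u_i - u_j\|^2$ has nonzero diagonal (namely $4r_i^2$), vanishes on every edge of $G$, and has rank at most $d+3$. Reading this in terms of the complement, $M$ is a valid certificate in Definition \ref{def:minrank} for $\overline G$, so
\[
\minrank_{\mathbb R}(\overline G) \leq d+3.
\]
Since $\overline{G(n,p)}$ has the same distribution as $G(n,1-p)$, and since $p \leq 1 - n^{-1}$ forces $1-p \geq n^{-1}$, Theorem \ref{thm:min_rank} applied to $G(n,1-p)$ over $\mathbb R$ yields
\[
\minrank_{\mathbb R}(\overline G) \geq \frac{n\log(1/(1-p))}{80\log n}
\]
with high probability. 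Combining the two inequalities gives the lower bound $d \geq \Omega(n\log(1/(1-p))/\log n)$ w.h.p.

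For the upper bound, I would use the simplex construction described in the unit-distance subsection: if $G$ has a proper coloring with $\chi(G)$ colors, placing all vertices in color class $i$ at the vertex $x_i$ of a regular simplex of diameter $1$ in $\mathbb R^{\chi(G)-1}$ realizes $G$ as a unit distance graph (since distinct color classes give distance $1$, and same-class vertices coincide, which is allowed because they are non-edges). A unit distance graph is trivially a touching-sphere graph with all radii $1/2$, so $G$ is a touching-sphere graph in $\mathbb R^{\chi(G)-1}$. For $n^{-0.99} \leq p \leq 1 - n^{-1}$, the classical result on the chromatic number of dense random graphs (which is precisely the input to Theorem \ref{t12}, applied with the roles of $G$ and $\overline G$ swapped) gives $\chi(G(n,p)) = \Theta(n\log(1/(1-p))/\log n)$ w.h.p., completing the upper bound.

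I do not anticipate a real obstacle: every geometric estimate has been carried out in the text preceding the statement, and the only things to verify are bookkeeping items, namely that Theorem \ref{thm:min_rank} applies to the complementary edge density $1-p$ in the stated range of $p$, that the simplex construction tolerates placing several vertices at the same point, and that the standard chromatic-number asymptotics for $G(n,p)$ hold throughout $n^{-0.99} \leq p \leq 1-n^{-1}$. The mildest subtlety is that the rank bound on $M$ is $d+3$ rather than $d$, but this additive constant is absorbed into the $\Theta$-notation.
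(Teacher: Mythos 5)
Your proof is correct and follows exactly the argument sketched in the paper's preamble: the matrix $M_{i,j}=(r_i+r_j)^2-\|u_i-u_j\|^2$ gives $\minrank_{\mathbb R}(\overline G)\leq d+3$, Theorem~\ref{thm:min_rank} applied to $\overline G\sim G(n,1-p)$ with $1-p\geq n^{-1}$ gives the lower bound on $d$, and the regular-simplex coloring construction together with the chromatic number asymptotics of $G(n,p)$ gives the matching upper bound. The bookkeeping you flag (complementary density, identical placements, the additive $+3$) is handled appropriately, so no gap remains.
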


\subsection{Graphs defined by a polynomial}

Let $P=P(x,y)=P(x_1,x_2, \dots ,x_d,y_1,y_2, \dots ,y_d)$,
where $x=(x_1, \dots x_d)$ and $y=(y_1, \dots ,y_d)$, be a
polynomial of $2d$ variables over a field $\mathbb{F}$, and assume
that it satisfies
$P(x,y)=P(y,x)$
for all $x,y \in \mathbb{F}^d$.
Say that a graph $G$ on $n$ vertices $1,2, \dots ,n$ is
a  $P$-graph over $\mathbb{F}^d$ if there are
vectors $x^{(1)}, \dots ,x^{(n)} \in \mathbb{F}^d$
such that $P(x^{(i)},x^{(i)}) \neq 0$ for all $1 \leq i \leq
n$, and for every pair of distinct adjacent vertices
$i,j$, $P(x^{(i)},x^{(j)})=0$. Thus, for example, unit distance
graphs correspond to the polynomial $1-\|x-y\|^2$. We will often think of $P$
as a sequence of polynomials, indexed by $n$ (so the number of variables is
allowed to grow with $n$).

For any $P$-graph as above, the matrix $M$ given by
$M_{i,j}=P(x^{(i)},x^{(j)})$ vanishes in every entry corresponding to
adjacent vertices, and has nonzero entries on the main diagonal. If the
degree of $P$ is large, then even a small number of variables $2d$ can be
enough to represent all $n$-vertex graphs as $P$-graphs. Indeed, if for
example, the field is $\mathbb{F}_2$, $d=\log_2 n$, and $P=\prod_{i=1}^d
(1+x_i+y_i)$, then for the set $X=\{0,1\}^d$ of $n=2^d$ vertices, we have
$P(x,x') \neq 0$ if and only if $x = x'$, meaning that 
every graph on  $n$ vertices is a $P$-graph, although the number of
variables is only $O(\log n)$. On the other hand, if $P$ is of
degree at most $3$, it is not difficult to see that if $G$ is a
$P$-graph then the rank of the
matrix $M$ defined as above is at most $2d+1$. To see this, write
$P$ in the form 
$$
P=c+\sum_{i=1}^d x_i f_i(y)+\sum_{j=1}^d y_j h_j(x).
$$
Next define, for each vector $x=(x_1,x_2, \dots ,x_d)$, two vectors $F(x)$
and $H(x)$ of length $2d+1$ each, as follows:
$$
F(x)=(1,x_1,x_2, \dots ,x_d, h_1(x),h_2(x), \dots ,h_d(x)),
$$
$$
H(x)=(c,f_1(x),f_2(x), \dots ,f_d(x), x_1, x_2, \dots ,x_d).
$$
Thus for every $x=(x_1, \dots ,x_d)$ and $y=(y_1, \dots, y_d)$, 
$P(x,y)$ is exactly the inner product of $F(x)$ with $H(y)$. This
shows that if $G$ is a $P$-graph then the matrix $M$ above can be
written as a product of the $n \times (2d+1)$ matrix whose rows are
the vectors $F(x)$ by the $(2d+1) \times n$ matrix whose columns are the vectors $H(x)$ (where in both cases $x$ goes over all vectors representing the vertices of $G$). This shows that indeed the rank of $M$ is at most $2d+1$
and implies the following.
\begin{theorem}\label{thm:P_graphs}
  \label{p44}
  Let $P=P(x_1, x_2, \dots ,x_d,y_1, y_2, \dots ,y_d)$ be a polynomial of
  degree at most $3$ over a field $\mathbb{F}$ and let $p = p(n)$ satisfy $
  n^{-1}\leq p\leq 1$.
  If the random graph $G = G(n,p)$ is a $P$-graph with probability
  $\Omega(1)$, then $d$ is at least
  $\Omega(\frac{n \log(1/(1-p))}{\log n})$.
\end{theorem}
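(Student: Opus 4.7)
The plan is to reduce Theorem \ref{thm:P_graphs} to Theorem \ref{thm:min_rank} applied to the complement of $G(n,p)$. The preceding discussion in the section already establishes the key linear-algebraic reduction: if $G$ is a $P$-graph for some symmetric polynomial $P$ of degree at most $3$, then the matrix $M$ defined by $M_{i,j} = P(x^{(i)},x^{(j)})$ has rank at most $2d+1$, with $M_{i,i}\neq 0$ for every $i$ and $M_{i,j} = 0$ for every distinct adjacent pair $i,j$. Comparing this with Definition \ref{def:minrank}, we see that $M$ is exactly a matrix witnessing that $\minrank_{\mathbb{F}}(\overline{G}) \leq 2d+1$, since non-adjacency in $\overline{G}$ means adjacency in $G$.

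Next, I would use the fact that the complement of $G(n,p)$ is distributed as $G(n,1-p)$. When $1-p \geq n^{-1}$, Theorem \ref{thm:min_rank} applied to $\overline{G}$ gives
\[
\minrank_{\mathbb{F}}(\overline{G}) \;\geq\; \frac{n\log(1/(1-p))}{80\log n}
\]
with probability $1-o(1)$. For $p$ closer to $1$ than $1-n^{-1}$, the claimed bound on $d$ is $o(n/\log n)$ or even trivial, and the theorem follows directly from the fact that $d$ is necessarily a nonnegative integer, so I would handle this regime separately by a one-line remark.

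The final step is a standard combination of events. By hypothesis, $G = G(n,p)$ is a $P$-graph with probability at least some absolute constant $c > 0$, while $\minrank_{\mathbb{F}}(\overline{G}) \geq \frac{n\log(1/(1-p))}{80\log n}$ holds with probability $1-o(1)$. For $n$ large enough, the intersection of these two events has strictly positive probability, so there exists a realization of $G$ that is simultaneously a $P$-graph and satisfies $\minrank_{\mathbb{F}}(\overline{G}) \geq \frac{n\log(1/(1-p))}{80\log n}$. Combining with the rank-$2d+1$ upper bound gives
\[
2d+1 \;\geq\; \frac{n\log(1/(1-p))}{80\log n},
\]
which yields $d = \Omega\!\left(\frac{n\log(1/(1-p))}{\log n}\right)$, as required.

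There is no real obstacle here: the whole content of the theorem is the reduction from $P$-graphs to bounded-rank matrices in Definition \ref{def:minrank}, which the preceding paragraph already carried out. The only thing to be slightly careful about is the direction of the complement (we are bounding $\minrank$ of $\overline{G}$, not of $G$) and the fact that we only have $\Omega(1)$ probability for the $P$-graph property rather than \whp{}; the latter is harmless because $\Omega(1) - o(1) > 0$.
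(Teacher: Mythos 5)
Your reduction exactly matches the paper's intended proof: a $P$-representation of $G$ yields an $n\times n$ matrix $M$ of rank at most $2d+1$ with nonzero diagonal that vanishes on every pair of distinct adjacent vertices, which is precisely a certificate that $\minrank_{\mathbb{F}}(\overline{G})\leq 2d+1$; since $\overline{G(n,p)}\sim G(n,1-p)$, Theorem~\ref{thm:min_rank} applied with parameter $1-p$ gives the lower bound, and intersecting the $\Omega(1)$-probability ``$G$ is a $P$-graph'' event with the $1-o(1)$-probability minrank event produces a realization witnessing $2d+1\geq n\log(1/(1-p))/(80\log n)$. That is essentially the paper's argument (which it leaves implicit after establishing the rank bound).

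However, your treatment of the boundary regime is backwards, and this is a genuine gap. The quantity $n\log(1/(1-p))/\log n$ is trivial (less than $1$) when $p$ is close to the \emph{lower} end $n^{-1}$, since there $\log(1/(1-p))\approx p=n^{-1}$; the ``$d$ is a nonnegative integer'' remark belongs there, not at the top end. When $p>1-n^{-1}$ one has $\log(1/(1-p))>\log n$, so the claimed bound exceeds $n$, which is emphatically not $o(n/\log n)$. In that regime Theorem~\ref{thm:min_rank} cannot be applied to $G(n,1-p)$ directly (it needs $1-p\geq n^{-1}$), and the best one can salvage via monotonicity of $\minrank_{\mathbb{F}}(G(n,q))$ in $q$ is $\minrank_{\mathbb{F}}(\overline{G})=\Omega(n)$, which matches $\Omega(n\log(1/(1-p))/\log n)$ only when $\log(1/(1-p))=O(\log n)$. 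For $p$ still closer to $1$, the stated bound exceeds $cn$ for every constant $c$ while $G(n,p)$ is whp complete and therefore a $P$-graph with $d=O(n)$, so the conclusion as literally written cannot hold; presumably the theorem's range should read $n^{-1}\leq p\leq 1-n^{-1}$, in line with Theorems~\ref{t42} and~\ref{t43}. In any event, your one-line remark does not handle the regime $p>1-n^{-1}$ as claimed.
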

\noindent
Note that the proof above works for every polynomial $P$ with
$O(d)$ monomials like, for example,
$$
P(x,y)= 1- \|x-y\|_4^4=1-\sum_{i=1}^d (x_i-y_i)^4,
$$ 
or even every polynomial $P$ which is the sum
of $O(d)$ terms, each being either a product of a monomial in the
variables of $x$ times any function of those in $y$, or vice versa.

\subsection{Spaces of polynomials}

For a field $\mathbb{F}$ and a linear space
$\mathbb{S}$
of polynomials 
in $\mathbb{F}[x_1, x_2 \dots ,x_m]$, a graph $G$ on
the vertices $1, 2, \dots ,n$ has a representation over
$\mathbb{S}$ if for every vertex $i$ there are $P_i \in \mathbb{S}$
and $v_i \in \mathbb{F}^m$ so that $P_i(v_i) \neq 0$ for all $i$
and 
$P_i(v_j)=0$
for every two distinct nonadjacent vertices $i$ and $j$.
As shown in~\cite{Al1}, if $G$ has such a
representation then its Shannon capacity is at most the dimension
of the linear space $\mathbb{S}$. It is easy to see that the
rank of the matrix $M=(M_{i,j})=(P_i(v_j))$ is at most
the dimension of $\mathbb{S}$. Therefore we get the following.
\begin{theorem}
  \label{p45}
  Let $\mathbb{S}$ be a linear space of polynomials 
  in variables $x_1, x_2, \dots ,x_m$ over a field 
  $\mathbb{F}$ and let $p=p(n)$ satisfy $n^{-1}\leq p \leq 1$.
  If
  $G = G(n,p)$ has a representation over $\mathbb{S}$
  with probability $\Omega(1)$,
  then 
  $\dim \mathbb{S}$ is at least $\Omega(\frac{n \log(1/p)}{\log n})$.
\end{theorem}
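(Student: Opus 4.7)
The plan is to show that if $G$ has a representation over $\mathbb{S}$, then $\minrank_{\mathbb{F}}(G) \leq \dim \mathbb{S}$, and then invoke Theorem \ref{thm:min_rank} (as strengthened in Remark \ref{remark:concentration}).

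First, given a representation $(P_i, v_i)_{i=1}^n$ of $G$ over $\mathbb{S}$, I would form the matrix $M \in \mathbb{F}^{n \times n}$ defined by $M_{i,j} = P_i(v_j)$. By the definition of a representation, $M_{i,i} = P_i(v_i) \neq 0$ for every $i$, and $M_{i,j} = P_i(v_j) = 0$ whenever $i \neq j$ are nonadjacent in $G$. So $M$ is a candidate matrix for the definition of $\minrank_{\mathbb{F}}(G)$.

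Next I would bound $\text{rank}(M)$ by $\dim \mathbb{S}$. Let $d = \dim \mathbb{S}$ and fix a basis $Q_1, \dots, Q_d$ of $\mathbb{S}$. Then each $P_i$ can be written as $P_i = \sum_{k=1}^d c_{i,k} Q_k$ for some coefficients $c_{i,k} \in \mathbb{F}$. Consequently
\[
  M_{i,j} = P_i(v_j) = \sum_{k=1}^d c_{i,k} Q_k(v_j),
\]
which exhibits $M$ as the product $CD$, where $C \in \mathbb{F}^{n \times d}$ has entries $c_{i,k}$ and $D \in \mathbb{F}^{d \times n}$ has entries $Q_k(v_j)$. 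Hence $\text{rank}(M) \leq d$, so $\minrank_{\mathbb{F}}(G) \leq \dim \mathbb{S}$.

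Finally, to convert the hypothesis that $G(n,p)$ has a representation with probability $\Omega(1)$ into a lower bound on $\dim \mathbb{S}$, I would combine this with the polynomially strong concentration mentioned in Remark \ref{remark:concentration}: for every $n$ large enough, the probability that $\minrank_{\mathbb{F}}(G(n,p)) < \frac{n \log(1/p)}{80 \log n}$ is at most $n^{-\Omega(1)} = o(1)$. Since the event of having a representation over $\mathbb{S}$ has probability bounded below by a positive constant (for infinitely many $n$), the two events must co-occur for some graph $G$, giving $\dim \mathbb{S} \geq \minrank_{\mathbb{F}}(G) = \Omega\!\left(\frac{n \log(1/p)}{\log n}\right)$, as required. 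The only mildly delicate point is this last step, since the theorem hypothesis is phrased with probability $\Omega(1)$ rather than w.h.p.; the polynomial-rate version of Theorem \ref{thm:min_rank} resolves it without difficulty.
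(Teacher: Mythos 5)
Your proof is correct and takes essentially the same approach as the paper: form the matrix $M_{i,j} = P_i(v_j)$, observe that it is a valid minrank witness with $\operatorname{rank}(M) \leq \dim \mathbb{S}$ via the factorization $M = CD$ through a basis of $\mathbb{S}$, and then invoke Theorem~\ref{thm:min_rank}. (The polynomial concentration from Remark~\ref{remark:concentration} is not strictly needed for the final step — the plain $o(1)$ failure probability from Theorem~\ref{thm:min_rank} already beats the $\Omega(1)$ lower bound on the representation probability — but this does not affect correctness.)
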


\section{Concluding Remarks and Open Problems}
\begin{itemize}
\item
  We have shown that for all $n^{-1} \leq p \leq
1-n^{-0.99}$ and for any finite or infinite field
$\mathbb{F}$, the minrank of the random  graph $G(n,p)$ over
$\mathbb{F}$ satisfies, \whp, 
$\minrank_{\mathbb{F}}(G)=\Theta(\frac{n \log(1/p)}{\log n})$.
    For $p =n^{-1}$  this gives a lower bound of $\Omega(n)$, and
as $n$ is always a trivial upper bound and the function
$\minrank_{\mathbb{F}}(G)$ for $G=G(n,p)$ is clearly monotone
decreasing in $p$, it follows that for all $0 \leq p \leq
    n^{-1}$, $\minrank_{\mathbb{F}}(G)=\Theta(n)$. 
    In the other extreme, for $p \geq 1 - O(n^{-1})$,
    the graph $G = G(n,p)$ satisfies \whp{} $\chi(\overline{G}) = \Theta(1)$, 
and hence $\minrank_{{\mathbb{F}}}(G) = \Theta(1)$. So the only 
regime in which there is a gap of more than a constant factor
 between the lower bound of 
Theorem \ref{thm:min_rank} and the typical value of $\chi(G(n,1-p))$
    is when $\omega(n^{-1}) \leq 1-p \leq n^{-1+o(1)}$. 

In all the range of $p$ discussed in this paper, 
the minrank of a graph is equal, up to a
constant factor, to the chromatic number of its complement. 
It will be interesting to decide how close these two quantities really are,
and in particular, to decide whether or not for $G=G(n,1/2)$,
$$
\minrank_{\mathbb{R}}(G) = (1+o(1)) \chi(\overline{G})   
~~\left( ~=(1+o(1)) \frac{n}{2 \log_2 n} \right).
$$
\item
It was shown in \cite{Ha} that the
minrank of a graph over any field is an upper bound for its
Shannon capacity. In particular, the infimum of
$\minrank_{\mathbb{F}}(G)$
over all fields $\mathbb F$ is such an upper bound.
Combining our technique here with a recent result of
Nelson (Theorem 2.1 in \cite{Ne})
that extends the one of \cite{RBG}, we can show
that for the random graph $G(n,1/2)$ this bound is
weaker than the theta function, which is $\Theta(\sqrt n)$~\cite{Ju}. More generally, we have the following.
\begin{theorem}\label{thm:all_fields}
\label{t51}
For every $n^{-1} \leq p \leq 1$, the random graph $G=G(n,p)$ satisfies \whp{} that
$\minrank_{{\mathbb{F}}}(G) \geq \Omega\left( \frac{n\log(1/p)}{\log n} \right)$ for every field
$\mathbb{F}$.
\end{theorem}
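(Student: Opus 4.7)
The plan is to upgrade the proof of Theorem \ref{thm:min_rank} from a single-field statement to a uniform statement over all fields, by counting zero-patterns of $(n,k,s)$-matrices \emph{across all fields simultaneously} rather than separately for each one. The crucial observation is that the cofactor polynomials arising in the proof of Lemma \ref{lem:main} (from Cramer's rule, expressing each entry $M_{i,j}$ as a polynomial in the allowed-nonzero entries divided by $\det(M')$) have integer coefficients and hence are defined over every field. Consequently the set of zero-patterns that arise as the zero-pattern of some $(n,k,s)$-matrix over some field $\mathbb{F}$ is contained in the set of zero-patterns of this one fixed integer-coefficient sequence of polynomials, evaluated under all assignments over all fields. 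Nelson's Theorem 2.1 in \cite{Ne} extends R\'onyai--Babai--Ganapathy to exactly this setting, bounding the number of zero-patterns of a sequence of $m$ polynomials of degree at most $d$ in $N$ variables uniformly over all fields by (essentially) $\binom{md+N}{N}$.

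Using Nelson's bound in place of Lemma \ref{lem:zero_patterns} in the proof of Lemma \ref{lem:main}, I would obtain a \emph{field-independent} analogue: the total number of zero-patterns realized by $(n,k,s)$-matrices over any field is still $\binom{n}{k}^2\cdot n^{O(ks/n)}$. With this in hand, the union bound driving the proof of Theorem \ref{thm:min_rank} carries over verbatim, but now bounds the probability of the stronger event
\[
\text{there exists a field } \mathbb{F} \text{ with } \minrank_{\mathbb{F}}(G(n,p)) \leq k.
\]
Indeed, the probabilistic half of the argument — that a fixed zero-pattern of an $(n',k',s')$-principal submatrix with $k'/n'\le k/n$ is consistent with $G(n,p)$ with probability at most $p^{(s'-n')/2}$ — depends only on which off-diagonal positions of the putative witness matrix vanish, not on the field in which it lives. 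Hence summing over $n'$, $k'$, $s'$, the choice of $U\subseteq V(G)$, and the zero-pattern (now counted uniformly over fields) gives $o(1)$ in exactly the same parameter range as before, and Lemma \ref{lem:submatrix} then forces $\minrank_{\mathbb{F}}(G) \geq k$ simultaneously for every field.

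The main obstacle is verifying that Nelson's universal bound is quantitatively of the correct shape: we need the per-zero-pattern cost to remain of the form $n^{O(ks/n)}$, so that it can be absorbed into the slack $n^{20k/n}\leq p^{-1/4}$ from the proof of Theorem \ref{thm:min_rank}. If Nelson's constant is worse than that of R\'onyai--Babai--Ganapathy (which I expect, since his bound is valid across all fields), I would simply replace the multiplicative constant $\tfrac{1}{80}$ in Theorem \ref{thm:min_rank} by a smaller absolute constant depending on Nelson's exponent; since Theorem \ref{t51} is stated only up to $\Omega(\cdot)$, this is harmless. A minor bookkeeping point to check is that the $\det(M')\ne 0$ normalization is still legitimate: as in the proof of Lemma \ref{lem:main}, dividing the polynomial expressions for the entries of $M$ by the nonzero scalar $\det(M')$ does not alter the zero-pattern, so it suffices to bound zero-patterns of the numerator polynomials, to which Nelson's theorem directly applies.
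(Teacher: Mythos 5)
Your proposal is correct and matches the paper's approach exactly: the paper also proves Theorem~\ref{thm:all_fields} by replacing Lemma~\ref{lem:zero_patterns} (R\'onyai--Babai--Ganapathy) with Nelson's Theorem 2.1 from \cite{Ne} in the proof of Theorem~\ref{thm:min_rank}, so that the zero-pattern count becomes uniform over all fields and the union bound controls the event simultaneously for every $\mathbb{F}$. You have in fact supplied more detail than the paper, which simply states this replacement without elaboration.
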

It is worth noting that it follows from results of Grosu \cite{Grosu} and of
Tao \cite{Tao} that the minrank of a graph $G$ over $\mathbb C$
is a lower bound for its minrank over every field $\mathbb F$ whose
characteristic is  sufficiently large
as a function of $G$.
By combining these results with Theorem \ref{thm:min_rank}, we immediately get that for every $n^{-1} \leq p \leq 1$, the random graph $G = G(n,p)$ \whp{} satisfies $\minrank_{{\mathbb{F}}}(G) \geq \Omega\left( \frac{n\log(1/p)}{\log n} \right)$ for every field $\mathbb{F}$ of characteristic
which is sufficiently large as a function of $n$.
The stronger assertion of Theorem \ref{thm:all_fields} follows by
replacing the result of \cite{RBG} (Lemma~\ref{lem:zero_patterns}) by that of \cite{Ne} in the
proof of Theorem  \ref{thm:min_rank}.
%
\item
  In general, the minrank of a graph may depend heavily
  on the choice of the field. 
  To see this we use the well-known fact that for any
  graph $G$ on $n$ vertices and for any 
  field ${\mathbb{F}}$, 
  $$\minrank_{\mathbb{F}}(G) \cdot 
  \minrank_{\mathbb{F}}(\overline{G}) \geq n.$$ Indeed, if $A=(A_{i,j})$
  and $B=(B_{i,j})$ are representations  for $G$ and its complement over
  ${\mathbb{F}}$ (as in Definition \ref{def:minrank}), 
  then the matrix $(A_{i,j} \cdot B_{i,j})$ has nonzeros on the 
  diagonal and zero in every other entry, hence its rank is $n$. 
  As it is a submatrix of the tensor product of $A$ and
  $B$, its rank is at most the product of their ranks, proving the
  above inequality. On the other hand, \cite{Al1} contains an
  example of a family of graphs $G_n$ on $n$ vertices satisfying
  $\minrank_{\mathbb{F}_p}(G_n),
  \minrank_{\mathbb{F}_q}(\overline{G_n})
  \leq n^{o(1)}$, where $\mathbb{F}_p$ and $\mathbb{F}_q$ are 
  two distinct appropriately chosen prime fields (with $p$ and $q$
    depending on $n$).
  An even more substantial gap between the minrank of a graph
  over a finite field and its minrank over the reals, 
  at least when insisting on a representation by a 
  positive semi-definite matrix,
  is given in \cite{Al2}, which provides an example of a sequence of graphs $G_n$ on $n$
  vertices for which $\minrank_{\mathbb{F}}(G_n) =3$ for some finite
  field ${\mathbb{F}}$ (depending on $n$), 
  whereas 
  the minimum possible rank over the reals by a positive
  semi-definite matrix is greater than $n^{1/4}$.

\item
  Haviv has recently combined the key lemma of \cite{GRW}
  with the Lov\'asz Local Lemma and proved a related result.
  To state it, we use the following density parameter.
  For a graph $H$ with $h \geq 3$ vertices, let $m_2(H)$ denote the maximum value of 
  $\frac{f'-1}{h'-2}$ over all pairs $(h',f')$ such that there is a
  subgraph $H'$ of $H$ with $h' \geq 3$ vertices and $f'$ edges.
  \noindent
  \begin{theorem}[Haviv \cite{Hav}]
    \label{t51}
    Let $H$ be a graph with $h \geq 3$ vertices and $f $ edges.
    Then there is some
    $c=c(H)>0$ so that for every finite field $\mathbb{F}$ 
    and every integer $n$
    there is a graph $G$ on $n$ vertices whose complement
    contains no copy of $H$, so that 
    $$
    \minrank_{\mathbb{F}} (G) \geq c \frac{n^{1-1/m_2(H)}}
    {\log (n |\mathbb{F}|)}.
    $$
  \end{theorem}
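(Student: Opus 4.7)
The approach, following~\cite{Hav}, combines the key lemma of~\cite{GRW} (Lemma~\ref{lem:submatrix}) with the Lov\'asz Local Lemma (LLL) applied to a random graph. Set $q = c_1(H) n^{-1/m_2(H)}$ with $c_1(H) > 0$ a sufficiently small constant, take $G' = G(n, q)$ on vertex set $[n]$, write $G = \overline{G'}$, and set $k = c_2(H) nq/\log(n|\mathbb{F}|)$. The goal is to show that with positive probability, both $G'$ is $H$-free and $\minrank_\mathbb{F}(G) > k$.

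Two families of bad events are considered. First, the $H$-copy events $A_\phi$ for injections $\phi\colon V(H) \hookrightarrow [n]$: $A_\phi$ asserts that $\phi$ embeds $H$ into $G'$, so $\Pr[A_\phi] = q^f$ and $A_\phi$ depends only on the pair-variables inside $\mathrm{Im}(\phi)$. Second, the minrank-witness events $B_{U, M}$ for $U \subseteq [n]$ of size $n'$ and $M$ an $(n',k',s')$-matrix over $\mathbb{F}$ on $U$ with $k'/n' \leq k/n$ (and hence $s' \geq (n')^2/(4k')$ by Lemma~\ref{lem:many_non-zero_entries}): $B_{U,M}$ asserts that every off-diagonal nonzero entry of $M$ corresponds to a non-edge of $G'$, giving $\Pr[B_{U,M}] \leq (1-q)^{(s'-n')/2} \leq \exp(-qs'/8)$, while the number of such $(U,M)$ with prescribed parameters $(n',k',s')$ is at most $\binom{n}{n'} \cdot |\mathbb{F}|^{O(k'n')}$ by the enumeration of rank-$k'$ matrices in~\cite{GRW}. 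By Lemma~\ref{lem:submatrix}, avoiding all the $B_{U,M}$ simultaneously forces $\minrank_\mathbb{F}(G) > k$.

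Now apply the asymmetric LLL to the union of the two families, with weights $x_{A_\phi} \asymp q^f$ and $x_{B_{U,M}} \asymp \Pr[B_{U,M}]$. For the $A$-events, verifying the LLL conditions reduces to the standard Ramsey-type inequalities $q^{f'-1} n^{h'-2} \lesssim 1$ for every subgraph $H' \subseteq H$ with $h' \geq 3$, which hold at $q \leq c_1 n^{-1/m_2(H)}$ by the very definition $m_2(H) = \max_{H' \subseteq H}(f'-1)/(h'-2)$, provided $c_1$ is small. For the $B$-events, one needs the exponent $qs'/8 = \Omega(n' \log(n|\mathbb{F}|))$ to dominate the matrix count $|\mathbb{F}|^{O(k'n')}$, which holds for $c_2$ small enough because $k'/n' \leq k/n = O(q/\log(n|\mathbb{F}|))$.

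The main technical obstacle is the joint LLL verification for two very heterogeneous families of bad events: the $B$-family is exponentially large with dense mutual dependencies (every pair of nearby $B_{U,M}$'s shares many variables), while the $A$-family is only polynomial in size with small local supports. This is overcome because $\Pr[B_{U,M}] \leq \exp(-\Omega(n' \log(n|\mathbb{F}|)))$ is exponentially smaller than the combinatorial factors ($|\mathbb{F}|^{O(k'n')}$, $\binom{n}{n'}$, etc.) arising in the LLL product, so the $B$-family effectively contributes a $1-o(1)$ factor to the LLL bookkeeping and does not disturb the standard $H$-free LLL computation at the threshold $q = n^{-1/m_2(H)}$. The LLL then outputs an instantiation $G'$ that is simultaneously $H$-free and satisfies $\minrank_\mathbb{F}(\overline{G'}) > k = \Omega(n^{1-1/m_2(H)}/\log(n|\mathbb{F}|))$, as required.
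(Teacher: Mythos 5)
This is Theorem~\ref{t51}, which the paper attributes to Haviv~\cite{Hav} and does not prove; the only guidance the paper gives is the one-sentence description that Haviv ``combined the key lemma of \cite{GRW} with the Lov\'asz Local Lemma.'' Your high-level strategy matches that description, and the setup (take $G'=G(n,q)$ at $q\asymp n^{-1/m_2(H)}$, define $G=\overline{G'}$, set $k\asymp nq/\log(n|\mathbb F|)$, and play copy-of-$H$ events against $(n',k',s')$-witness events from Lemma~\ref{lem:submatrix}) is clearly the right skeleton. So as an \emph{approach} it is consistent with what the paper cites.

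However, there are two concrete gaps in the execution. First, the matrix count ``$|\mathbb{F}|^{O(k'n')}$'' is the naive bound on \emph{all} rank-$k'$ matrices and discards exactly the sparsity information that Definition~\ref{def:sparse_matrix} was designed to exploit. With this crude count, the comparison $qs'\gtrsim k'n'\log|\mathbb{F}|$ that your argument needs reduces (after plugging in $s'\ge (n')^2/(4k')$ and $k'/n'\le k/n$) to $\log(n|\mathbb{F}|)\gtrsim c_2\, qn'$, which fails badly once $n'$ is a polynomial in $n$. What the GRW structure actually gives is $\binom{n'}{k'}^2\binom{2k'n'}{4k's'/n'}|\mathbb{F}|^{4k's'/n'}=(n|\mathbb{F}|)^{O(k's'/n')}$: the $|\mathbb F|$-exponent is $k's'/n'$, not $k'n'$, and only with this sharper bound does the needed inequality collapse to $c_2\lesssim 1$ uniformly in $n'$. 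Second, the LLL bookkeeping is only argued in one direction. You check that the $B$-family has small total weight, so it perturbs the LLL conditions for the $A_\phi$ by a $(1-o(1))$ factor. But the LLL condition must also hold \emph{for} each $B_{U,M}$: with your choice $x_{B_{U,M}}\asymp\Pr[B_{U,M}]$, that condition reads $\Pr[B_{U,M}]\le x_{B_{U,M}}\prod_{A_\phi\sim B_{U,M}}(1-x_{A_\phi})\cdot(\dots)$, and when $U$ is large (say $|U|=\Theta(n)$) the product over the $\Theta(n^h)$ copies $A_\phi$ meeting $U$ in two vertices is of order $\exp(-\Theta(n^h q^f))$, which is exponentially small at $q\asymp n^{-1/m_2(H)}$ (already for $H=K_3$ it is $\exp(-\Theta(n^{3/2}))$). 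So the vanilla asymmetric LLL with these weights does not verify, and your sketch does not say what replaces it — whether Haviv uses a lopsided/monotone variant, a conditional second-moment argument after LLL, or a different decomposition of the bad events. This is precisely the point your write-up waves at (``does not disturb the standard $H$-free LLL computation'') without addressing the reverse implication.
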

  Combining the proof of Haviv with our approach here 
  we can get rid of the dependence on the size of the field and prove
  the following stronger result.
  \begin{theorem}
    \label{t52}
    Let $H$ be a graph with $h \geq 3$ vertices.
    Then there is a constant $c=c(H)>0$ such that
    for every finite or infinite field
    $\mathbb{F}$ and every integer $n$ 
    there is a graph $G$ on $n$ vertices whose complement
    contains no copy of $H$, so that 
    $$
    \minrank_{\mathbb{F}}(G) \geq c \frac{n^{1-1/m_2(H)}}
    {\log n}.
    $$
  \end{theorem}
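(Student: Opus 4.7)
The plan is to mimic Haviv's proof of Theorem~\ref{t51}, replacing the only step that carries a $|\mathbb{F}|$ factor---his use of the key lemma of~\cite{GRW} for counting zero-patterns of $(n,k,s)$-matrices over $\mathbb{F}$---by our Lemma~\ref{lem:main}, whose bound is independent of the ground field. Since the $\log|\mathbb{F}|$ factor in the denominator of Theorem~\ref{t51} arises precisely from that counting step, this substitution directly yields the claimed field-independent bound.

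Concretely, set $q = c_1(H) \cdot n^{-1/m_2(H)}$ for a small constant $c_1(H)>0$, let $p = 1-q$, and consider $G = G(n,p)$, so that $\overline{G}$ is the binomial random graph of density $q$. The Lov\'asz Local Lemma is applied, as in Haviv's argument, to two families of bad events. The first consists, for each injection $\varphi : V(H) \hookrightarrow [n]$, of the event $A_\varphi$ that $\varphi$ realizes a copy of $H$ in $\overline{G}$; the simultaneous non-occurrence of all $A_\varphi$ is the $H$-freeness of $\overline{G}$. The second consists, for each subset $U \subseteq [n]$ of size $n'$ and each zero-pattern $Z$ of an $(n',k',s')$-matrix with $k'/n' \leq k/n$, of the event $B_{U,Z}$ that every off-diagonal nonzero position of $Z$ is an edge of $G$; by Lemma~\ref{lem:submatrix}, the simultaneous non-occurrence of all such $B_{U,Z}$ implies $\minrank_{\mathbb{F}}(G) > k$, where I set $k = c_2(H) \cdot n^{1-1/m_2(H)}/\log n$ for a small $c_2(H)>0$.

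Each $A_\varphi$ has probability $q^f$ with $f = |E(H)|$ and depends on $O(n^{h-2})$ other $A_{\varphi'}$, so that the first family satisfies the standard LLL conditions for avoiding $H$-copies in $G(n,q)$ once $c_1(H)$ is small in terms of $m_2(H)$. Each $B_{U,Z}$ has probability $p^{(s'-n')/2}$ by the same counting of required edges as in the proof of Theorem~\ref{thm:min_rank}, and the total number of pairs $(U,Z)$ is at most $\binom{n}{n'} \cdot \binom{n'}{k'}^2 \cdot (n')^{20k's'/n'}$ via Lemma~\ref{lem:main}---crucially, with no $|\mathbb{F}|$ factor. Verifying the LLL condition for the second family then reduces to essentially the same summation as in~\eqref{eq:union_bound}, with $\log(1/p) \approx q \approx n^{-1/m_2(H)}$ in place of the earlier regime; that calculation shows the contribution is $o(1)$ provided $c_2(H)$ is small enough.

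The main obstacle will be the careful bookkeeping of dependencies in the combined LLL, since a single edge of $G$ may participate in many $(n',k',s')$-low-rank certificates as well as in many potential $H$-copies. I expect this step to follow Haviv's framework with only cosmetic modifications, since the dependency degree of each $B_{U,Z}$ is governed by its $\leq 4k's'/n'$ specified entries---the sparse row/column basis of Definition~\ref{def:sparse_matrix}---which is exactly the structure that underlies both Haviv's argument and our Lemma~\ref{lem:main}. Because every estimate entering the LLL conditions is independent of $\mathbb{F}$, the existence statement produced by the Local Lemma is also uniform in $\mathbb{F}$, giving a graph $G$ whose complement is $H$-free and satisfies $\minrank_{\mathbb{F}}(G) > k$ for every finite or infinite $\mathbb{F}$, as required.
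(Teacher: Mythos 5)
The paper states that Theorem~\ref{t52} follows by ``combining the proof of Haviv with our approach here,'' i.e.\ by running Haviv's Local Lemma argument but replacing his field-dependent zero-pattern count with Lemma~\ref{lem:main}, and then says ``we omit the detailed proof.'' Your high-level plan matches that one-sentence description exactly: you correctly identify Lemma~\ref{lem:main} as the drop-in replacement that eliminates the $\log|\mathbb{F}|$, and you set the parameters $q\asymp n^{-1/m_2(H)}$ and $k\asymp n^{1-1/m_2(H)}/\log n$ in the right regime.

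Where your sketch is shaky is precisely the part you flag as ``the main obstacle.'' Two issues. First, the claim that ``verifying the LLL condition for the second family reduces to essentially the same summation as in~\eqref{eq:union_bound}'' is not right as stated: a plain union bound on the low-rank events cannot be compared against the (very small) probability that $\overline{G}$ is $H$-free, and the asymmetric LLL condition for the events $B_{U,Z}$ is \emph{not} the union-bound sum --- it must incorporate the correction $\prod_{\varphi\sim B_{U,Z}}(1-x_\varphi)^{-1}$ coming from the $A_\varphi$'s that share an edge with $B_{U,Z}$. It is exactly this interaction term that forces the choice $q\lesssim n^{-1/m_2(H)}$; if you drop it, the optimization over $q$ gives a different and generally weaker exponent. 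Second, you say the dependency degree of $B_{U,Z}$ is ``governed by its $\leq 4k's'/n'$ specified entries''; that is the number of nonzero entries in the chosen row/column basis, which controls the \emph{count} of zero-patterns in Lemma~\ref{lem:main}, but the event $B_{U,Z}$ itself is determined by all $\approx s'/2$ off-diagonal nonzero positions of $Z$, and it is this larger set that governs the dependency with the $H$-copy events. Both points are exactly where Haviv's bookkeeping lives; since they are what produce the exponent $1-1/m_2(H)$, they are more than cosmetic. With those two corrections incorporated, the plan is sound, and the replacement of the GRW counting lemma by Lemma~\ref{lem:main} does indeed make every remaining estimate field-free as you say.
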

  We omit the detailed proof.
\item
  The results in this paper are formulated for undirected graphs, but
  can be easily extended  to the directed case, with essentially the
  same proofs. In particular, Theorem \ref{thm:P_graphs} also holds for digraphs defined by a polynomial (in this case we do not need to assume that the polynomial $P$ is symmetric,  see Subsection 4.4). 
\end{itemize}

\paragraph{Acknowledgements}
We thank Peter Nelson for telling us about his paper \cite{Ne}.
The research on this project was initiated during a joint research
workshop of Tel Aviv University and the Free University of Berlin
on
Graph and Hypergraph Coloring Problems, held in Berlin in August
2018,
and supported by a GIF grant number G-1347-304.6/2016. We would
like to
thank the German-Israeli Foundation (GIF) and
both institutions for their support.


\begin{thebibliography}{99}

\bibitem{Al1}
  N. Alon. The  Shannon capacity of a union. {\em Combinatorica}, 18:301--310, 1998.

\bibitem{Al2}
N. Alon.
    {\em Lov\'asz, vectors, graphs and codes}. Manuscript
\url{https://www.tau.ac.il/~nogaa/PDFS/ll70.pdf}, 2018.

\bibitem{AK}
N. Alon and A. Kupavskii.
Two notions of unit distance graphs.
    {\em J. Combin. Theory, Ser. A}, 125:1--17, 2014.

\bibitem{BBJK}
Z. Bar-Yossef, Y. Birk, T. S. Jayram, and T. Kol. Index
    coding with side information.
    {\em IEEE Trans. Inf. Theory}, 57(3):1479--1494, 2011.

\bibitem{Bo}
B. Bollob\'as. The chromatic number of random graphs.
    {\em Combinatorica}, 8(1):49--55, 1988.

\bibitem{GRW}
A. Golovnev, O. Regev, and O. Weinstein.
    The minrank of random graphs. 
    {\em Proc. APPROX/RANDOM 2017}, 46:1--13.

\bibitem{Grosu}
C. Grosu. $\mathbb{F}_p$ is locally like $\mathbb{C}$.
    {\em J. London Math. Soc.}, 89(3):724--744,
    2014.

\bibitem{Ha}
W. Haemers. An upper bound for the Shannon capacity of a graph.
    In: {\em Colloq.
    Math. Soc. J\'anos Bolyai}, 25:267--272, 1978.

\bibitem{Hav}
I. Haviv. On minrank and forbidden subgraphs.
        {\tt arXiv:1806.00638 [cs.DS]}, 2018.

\bibitem{HL}
I. Haviv and M. Langberg, On linear index coding for random graphs. In
ISIT 2012, pp.\ 2231-2235. IEEE, 2012.

\bibitem{JLR} S. Janson, T. \L uczak, and A. Ruci\'nski. {\em Random
Graphs}.
Wiley, New York, 2000.

\bibitem{Ju} F. Juh\'asz. The asymptotic behaviour of Lov\'asz'
$\vartheta$
function for random graphs.
{\em Combinatorica},
2:153--155, 1982.

\bibitem{Kn}
D. E. Knuth. The sandwich theorem.  
    {\em Electronic J. Combin.}, 1 \#A1, 1994.

\bibitem{Lo}
L. Lov\'{a}sz. On the Shannon capacity of a graph.
    {\em IEEE Trans. Inf. Theory}, 25(1):1--7, 1979.

\bibitem{LS}
E. Lubetzky and U. Stav. Non-linear index coding outperforming the linear optimum. In FOCS 2007, pp.\ 161-168. IEEE, 2007.

\bibitem{Ne}
Peter Nelson. Almost all matroids are
nonrepresentable. {\em Bull.~Lond.~Math.~Soc.}, 50(2):245--248, 2018.

\bibitem{RBG} L. R\'{o}nyai, L. Babai, and M. K. Ganapathy.
On the number of zero-patterns of a sequence of
polynomials.
{\em J. Amer. Math. Soc.}, 14(3):717--735, 2001.

\bibitem{Tao}
T. Tao, ``Rectification and the Lefschetz principle", 
14 March 2013,
\url{http://terrytao.wordpress.com/2013/03/14/rectification-and-the-lefschetz-principle}.
\end{thebibliography}
\end{document}